\newcommand{\F}{\mathbb{F}}
\newcommand{\E}{\mathbb{E}}
\newcommand{\Z}{\mathbb{Z}}
\newcommand{\Q}{\mathbb{Q}}
\newcommand{\GL}{\mathrm{GL}}
\newcommand{\PGL}{\mathrm{PGL}}
\renewcommand{\epsilon}{\varepsilon}
\renewcommand{\det}{\mathrm{det}}
\newcommand{\Ddet}{\mathrm{Det}}
\newcommand{\sut}{\hspace{2.5pt} | \hspace{2.5pt}}
\newtheorem{theorem}{Theorem}[section]
\newtheorem{lemma}[theorem]{Lemma}
\newtheorem{corollary}[theorem]{Corollary}
\newtheorem{notn}[theorem]{Notation}
\theoremstyle{definition}
\theoremstyle{definition}
\newtheorem{remark}[theorem]{Remark}
\theoremstyle{definition}
\begin{document}

\title{Locally nilpotent linear groups}

\begin{abstract}
We survey aspects of 
locally nilpotent linear groups. Then 
we obtain a new classification; namely, we classify 
the irreducible maximal 
locally nilpotent subgroups of $\GL(q, \F)$
for prime $q$ and any field $\F$.
\end{abstract}

\author{A.~S. Detinko and D.~L. Flannery}

\maketitle

\section{Why locally nilpotent linear groups?}

Linear (matrix) groups are commonly used to
represent abstract groups. Early work 
on linear groups was undertaken in the second 
half of the nineteenth century.  
Linear group theory is now a rich 
subject deeply connected to other areas of 
mathematics. Over the past few decades, revived 
interest in 
matrix groups has been driven 
partly by the development of computational 
group theory.

Recall that a group is said to be locally 
nilpotent if every finite
subset generates a nilpotent subgroup.
Locally nilpotent groups 
therefore generalize nilpotent groups. 
Many structural and classification results 
for locally nilpotent linear groups are known. 
Further progress can be made via 
computational techniques.

Group-theoretic algorithms accept
a finite generating set  as input. 
The celebrated `Tits alternative' states that a 
finitely generated matrix group 
either is solvable-by-finite (i.e., it has a 
normal solvable subgroup of finite 
index), or it contains a nonabelian free 
subgroup. For groups of the latter kind, 
basic computational problems, such as 
membership testing and the conjugacy problem, are
undecidable in general. On the other hand, 
locally nilpotent linear 
groups, being solvable and nilpotent-by-finite,
are more tractable for computation. 
(Note that a solvable linear group is 
nilpotent-by-abelian-by-finite.) This point is 
accentuated by Gromov's result~\cite{Gromov}, 
which implies that a finitely generated group has 
polynomial growth if and only if it is 
nilpotent-by-finite.
Hence, as explained in \cite{Beals}, certain 
algorithmic efficiency problems can be 
overcome for locally nilpotent linear groups.

Extra motivation for continued study of locally 
nilpotent linear groups arises from their 
applications. 
Here we mention almost 
crystallographic groups, which appear as 
nilpotent-by-finite linear groups over 
$\Q$~\cite[$\S \hspace{.5pt} 8.2.3$]{Holt}. 
Moreover, a finitely generated nilpotent group 
is polycyclic and so isomorphic to a
subgroup of $\GL(n, \Z)$ for some $n$.  
Algorithms for nilpotent subgroups of $\GL(n, \Q)$
thereby serve as a key step toward algorithms for 
abstract finitely generated nilpotent groups.

\section{The structure of locally nilpotent linear groups}

A systematic investigation of 
locally nilpotent linear groups
was carried out by D.~A.~Suprunenko, starting in 
the late 1940s~\cite{SuprPre}. He classified 
the maximal locally nilpotent subgroups of $\GL(n,\F)$ 
for algebraically closed $\F$. Several authors 
 extended Suprunenko's results to other $\F$. In 
particular, criteria for the number 
of $\GL(n,\F)$-conjugacy 
classes of maximal locally nilpotent subgroups
to be finite, and classification in 
partial cases, were enabled by a detailed 
description of locally nilpotent linear groups 
over an arbitrary field $\F$
(see, e.g., \cite{Konyukh8}).

In this section, we outline  some of the most
important structural results about locally nilpotent 
linear groups. Much research has focused on the 
maximals, due to the fact that 
each locally nilpotent subgroup of 
$\GL(n,\F)$ is contained in a maximal locally nilpotent
subgroup (by contrast, a nilpotent 
subgroup of $\GL(n,\F)$ might not
be contained in a maximal nilpotent subgroup). 
We follow a standard reduction
scheme: reducible $\rightarrow$ completely reducible 
$\rightarrow$ irreducible $\rightarrow$ absolutely irreducible 
$\rightarrow$ primitive.

\subsection{Reducible locally nilpotent linear groups} 
We use terminology for linear groups as in 
\cite{Supr,Wehrfritz}. 
A reducible subgroup $G$ of $\GL(n, \F)$ is conjugate 
to a group of block upper triangular matrices, where
the main diagonal blocks (\emph{irreducible
parts}) give irreducible representations 
of $G$ over $\F$ in smaller degree. Suppose that
 $G$ is locally nilpotent and indecomposable.
Then the irreducible parts of $G$ are pairwise
equivalent~\cite[p. \hspace{-3pt}223, Theorem~2]{Supr}, 
so that $G$ may be conjugated
to a group of block upper triangular matrices
\[
\qquad \begin{bmatrix}
{\tiny a(g)} & a_{12}(g)  & \cdots & a_{1k}(g) \\

& & \vspace{-10pt}\\

0 & {\tiny a(g)}  & \cdots & a_{2k}(g) \\

& & \vspace{-10pt}\\

\vdots & \vdots & \ddots & \vdots\\

& & \vspace{-10pt}\\

0 & 0 & \cdots & {\tiny a(g)} 
\end{bmatrix}, \quad g\in G
\]
where $a_{ij}(g)\in \mathrm{Mat}(n/k,\F)$
and $a(G) = \{a(g) \sut g \in G\} \leq \GL(n/k, \F)$ 
is irreducible (and locally nilpotent). If 
$\F$ is perfect then $a_{ij} (g) = c_{ij} (g)a(g)$ where 
$c_{ij} (g)$ centralizes $a(G)$; 
thus $G$ is contained in the direct product of
a unitriangular group over a division algebra and 
a completely reducible group over $\F$ with equivalent 
irreducible parts. This reduces study of locally 
nilpotent linear groups to the irreducible case.

Another passage to the completely 
reducible case uses the Jordan decomposition. 
For each $g \in \GL(n,\F)$ there are a unique 
unipotent matrix $g_u \in \allowbreak 
\GL(n,\overline{\F}\hspace{.5pt} )$
and a unique diagonalizable matrix 
$g_d  \in \GL(n,\overline{\F}\hspace{.5pt} )$ such 
that $g = g_d g_u = g_u g_d$ ($\hspace{.6pt}\overline{\F}$ 
 denoting the algebraic closure of $\F\hspace{.6pt}$); 
see \cite[7.2]{Wehrfritz}. 
If $\F$ is perfect then $g_u, g_d\in \GL(n,\F)$.
\begin{theorem}[{\cite[p. \hspace{-3pt}240, Theorem~6]{Supr}} 
and {\cite [7.11]{Wehrfritz}}]
\label{Thm21}
Let $G \leq \GL(n, \F)$ be locally
nilpotent. Define 
$G_u = \{g \in G \sut g \hspace{6pt} \mathrm{unipotent}\}$, 
$G_d = \{g \in G \sut g \hspace{6pt} \mathrm{diagonalizable}\}$.
Then $G_u,  G_d \hspace{.5pt} \unlhd \hspace{.5pt} G$ and 
$\langle G_u , G_d\hspace{.2pt} \rangle = G_u \times G_d$.
\end{theorem}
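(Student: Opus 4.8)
The plan is to reduce the statement first to the case of a nilpotent $G$, and then to the structure theory of nilpotent linear algebraic groups. For the first reduction, note that of the assertions to be proved, most are immediate: a conjugate of a unipotent (respectively diagonalizable) matrix is again unipotent (respectively diagonalizable), so $G_u$ and $G_d$ are stable under conjugation by $G$; a matrix that is at once unipotent and diagonalizable is the identity, so $G_u \cap G_d = 1$; and, granting that $G_u$ and $G_d$ are normal subgroups of $G$, one has $[G_u,G_d] \leq G_u \cap G_d = 1$, hence $\langle G_u,G_d\rangle = G_uG_d = G_u \times G_d$. What remains is that $G_u$ and $G_d$ are closed under multiplication; but for $a,b \in G_u$ (say) this is a statement about the group $\langle a,b\rangle$, which is nilpotent by hypothesis. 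So it suffices to prove the theorem when $G$ itself is nilpotent.

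Assume now that $G$ is nilpotent, and view $G \leq \GL(n,\overline{\F})$, so that $G_d$ is precisely the set of semisimple elements of $G$. Put $U = \langle G_u\rangle$ and $S = \langle G_d\rangle$, the subgroups generated by the unipotent, respectively semisimple, elements; both are nilpotent. It is enough to show that every element of $U$ is unipotent and every element of $S$ is semisimple, for then $U = G_u$ and $S = G_d$ are normal subgroups and we are done by the preceding paragraph. Passing to Zariski closures — recalling that the closure of a nilpotent linear group is nilpotent, since nilpotency of bounded class is cut out by the vanishing of an iterated commutator map — and noting that the closure of $\langle G_u\rangle$ is generated as an algebraic group by unipotent elements (the closure of a cyclic unipotent group is unipotent), and similarly for $\langle G_d\rangle$, we are reduced to the following statement.

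Key step: if $H$ is a nilpotent linear algebraic group generated by unipotent elements then $H$ is unipotent; dually, if $H$ is generated by semisimple elements then every element of $H$ is semisimple. Here I would argue from the structure of $H$. Its identity component is connected nilpotent, so $H^{\circ} = T \times R_u(H^{\circ})$ with $T$ the unique maximal torus of $H^{\circ}$, hence characteristic in $H^{\circ}$ and normal in $H$; in fact $T$ is central in $H$, because the finite group $H/C_H(T)$ acts faithfully on the character lattice of $T$ by finite-order integer matrices, and nilpotency of $H$ forces each such matrix to act unipotently, hence (being of finite order) trivially. Now if $H$ is generated by unipotents, inspection of the cyclic subgroups shows that $T$ is trivial, so $H^{\circ} = R_u(H^{\circ})$ is unipotent and $H/H^{\circ}$ is a finite nilpotent group generated by unipotents — hence trivial in characteristic $0$, and a finite $p$-group in characteristic $p$ — whence, since every $g \in H$ satisfies $g^{m} \in H^{\circ}$ for suitable $m$, a short order argument gives that $H$ is unipotent. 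Dually, if $H$ is generated by semisimple elements then $R_u(H^{\circ}) = 1$, so $H^{\circ} = T$, and $H/H^{\circ}$ has order prime to the characteristic; then for $g \in H$ the relation $g^{|H/H^{\circ}|} \in T$ forces the unipotent part of $g$ to be trivial, so $g$ is semisimple.

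I expect the Key step to be the crux, and within it the passage from $H^{\circ}$ to $H$. The connected case — that a connected nilpotent algebraic group is the direct product of a central torus and its unipotent radical — is classical; but in positive characteristic a nilpotent algebraic group can possess semisimple or unipotent elements lying outside its identity component, so the finite component group $H/H^{\circ}$ must be handled with care (this is precisely where one uses that finite nilpotent groups split as direct products of their primary components, and it is also why mere solvability would not suffice). A subsidiary point is to check that the two halves of the Key step interlock to yield the theorem: $U = G_u$ and $S = G_d$ are then normal in $G$ with $G_u \cap G_d = 1$, and $\langle G_u,G_d\rangle = G_u \times G_d$ follows as in the first paragraph.
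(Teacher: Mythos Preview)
The paper does not prove this theorem; it merely quotes it from \cite{Supr} and \cite{Wehrfritz}. So there is no in-paper argument to compare against. The cited proofs are more direct and do not pass through the theory of algebraic groups; for example, in characteristic $p>0$ a matrix is unipotent exactly when it is a $p$-element, and the $p$-elements of any locally nilpotent group form a subgroup, which handles $G_u$ immediately.

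Your reduction to the nilpotent case and your passage to Zariski closures are correct, and the argument that the maximal torus $T\leq H^\circ$ is central in $H$ is fine. The gap is at ``inspection of the cyclic subgroups shows that $T$ is trivial''. In characteristic~$0$ this can be made to work: the closure of a nontrivial cyclic unipotent group is a copy of $\mathbb{G}_a$, so $H$ is generated by closed connected subgroups, hence connected, and then $H=T\times R_u(H)$ with all generators in $R_u(H)$ forces $T=1$. In characteristic~$p$, however, the cyclic unipotent subgroups are finite, and the fact that each one meets $T$ trivially does not preclude $T\neq 1$. What you actually need here is the full structure theorem for (possibly disconnected) nilpotent linear algebraic groups, namely that $H=H_s\times H_u$ with $H_s,H_u$ closed; your Key step is essentially equivalent to this. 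One route in characteristic~$p$: modulo $R_u(H^\circ)$ the identity component is the central torus $T$, the component group is finite nilpotent and generated by $p$-elements, hence a $p$-group $P$, and since $T$ is uniquely $p$-divisible the central extension splits as $T\times P$; the unipotent generators then lie in $\{1\}\times P$, so $T=1$. The assertion ``dually $R_u(H^\circ)=1$'' for the semisimple half is not literally dual---$R_u(H^\circ)$ need not be central in $H$---and likewise requires its own argument. You have correctly flagged the passage from $H^\circ$ to $H$ as the crux; it simply needs more than your sketch currently provides.
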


By Theorem~\ref{Thm21}, if $G$ is completely 
reducible locally nilpotent then every subgroup 
of $G$ is completely reducible; so the
elements of $G$ are diagonalizable 
(see \cite[p. \hspace{-6.5pt} 239, Theorem~5]{Supr} 
and \cite[7.12]{Wehrfritz}).
\begin{corollary}[{\cite[7.13]{Wehrfritz}}]
If $G \leq \GL(n,\F)$ is locally nilpotent and splittable
(meaning that $g_u , g_d \in G$ for each $g \in G$), 
then $G = G_u \times  G_d$.
\end{corollary}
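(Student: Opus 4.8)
The plan is to deduce this immediately from Theorem~\ref{Thm21}, so the proof will be short: essentially a two-line verification of the two inclusions $G \leq G_u \times G_d$ and $G_u \times G_d \leq G$.

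First I would dispose of the easy inclusion. Since $G_u$ and $G_d$ are subsets of $G$, the subgroup $\langle G_u, G_d\rangle$ they generate is contained in $G$; but by Theorem~\ref{Thm21} this subgroup is exactly $G_u \times G_d$, so $G_u \times G_d \leq G$ at once, with no use of the splittability hypothesis.

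For the reverse inclusion I would take an arbitrary $g \in G$ and invoke its Jordan decomposition $g = g_d g_u = g_u g_d$, with $g_u$ unipotent and $g_d$ diagonalizable. Splittability says precisely that these Jordan components lie in $G$; being unipotent, $g_u \in G_u$, and being diagonalizable, $g_d \in G_d$. Hence $g = g_d g_u \in \langle G_u, G_d\rangle = G_u \times G_d$, again using Theorem~\ref{Thm21}. Since $g$ was arbitrary, $G \leq G_u \times G_d$, and combined with the first inclusion this gives $G = G_u \times G_d$.

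I do not expect any genuine obstacle here: all of the content — that $G_u$ and $G_d$ are normal and that together they generate their internal direct product — is already packaged in Theorem~\ref{Thm21}. The only point worth a word of care is that the $g_u$, $g_d$ named in the definition of \emph{splittable} are indeed the Jordan components of $g$, so that they land in $G_u$ and $G_d$ respectively; this is immediate from the uniqueness of the Jordan decomposition recalled before Theorem~\ref{Thm21}.
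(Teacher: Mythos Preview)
Your argument is correct and is exactly the intended deduction: the paper states this result as an immediate corollary of Theorem~\ref{Thm21} (citing \cite[7.13]{Wehrfritz}) without writing out a proof, and your two-inclusion verification from Theorem~\ref{Thm21} plus splittability is precisely what that presentation presupposes. There is nothing to add.
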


For example, if $\F$ is finite and $G$ is 
nilpotent then $G$ is splittable.
More generally:
\begin{theorem}[{\cite[p. \hspace{-6.5pt} 136, 
Proposition~3]{Segal}}] 
\label{Thm23}
Let $G$ be a nilpotent subgroup of 
$\hspace{.5pt} \GL(n,\F)$, where $\F$ is 
a perfect field. Then
$\widehat{G}_u := \{g_u \sut g \in G\}$,
  $\widehat{G}_d := \{g_d \sut g \in G\}$
 are subgroups of $\GL(n,\F)$,
$\widehat{G}_d$ is completely reducible, 
and $G\leq \widehat{G}_u \times \widehat{G}_d$.
\end{theorem}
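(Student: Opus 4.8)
The plan is to pass to the Zariski closure of $G$ and apply the structure theory of nilpotent linear algebraic groups. I begin with two cosmetic reductions. A matrix that is simultaneously unipotent and diagonalizable is the identity, so $\widehat{G}_u \cap \widehat{G}_d = 1$; also $g = g_u g_d$ for every $g \in G$. Hence, once we know that $\widehat{G}_u$ and $\widehat{G}_d$ are subgroups of $\GL(n,\F)$ that centralize one another, the set $\widehat{G}_u\widehat{G}_d$ is their internal direct product and contains $G$, and everything except the complete reducibility of $\widehat{G}_d$ follows. Because $\F$ is perfect, the Jordan parts $g_u,g_d$ already lie in $\GL(n,\F)$; moreover ``is a subgroup'' and ``centralize one another'' are assertions about a fixed set of matrices, insensitive to enlarging the field, while for perfect $\F$ complete reducibility over $\F$ is equivalent to complete reducibility over $\overline{\F}$ (the extension $\overline{\F}/\F$ being separable). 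So I may assume $\F = \overline{\F}$.

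Let $H \leq \GL(n,\F)$ be the Zariski closure of $G$. Since $G$ is nilpotent of some class $c$ and the condition $\gamma_{c+1}(H)=1$ is Zariski-closed, $H$ is a nilpotent linear algebraic group; and for $g \in G$ the parts $g_u, g_d$ coincide with the Jordan components of $g$ in the algebraic group $H$, so they lie in $H$. The heart of the matter is the following claim about $H$: \emph{the set $H_u$ of unipotent elements and the set $H_s$ of semisimple elements of $H$ are closed subgroups; $H_s \cap H_u = 1$; they centralize one another; and $H = H_s \times H_u$.} Granting this, the projections $\pi_s\colon H \to H_s$ and $\pi_u\colon H \to H_u$ are morphisms with $\pi_s(h)=h_s$ and $\pi_u(h)=h_u$ (for $h = h_s h_u$ with $h_s \in H_s$ and $h_u \in H_u$ commuting is necessarily the Jordan decomposition of $h$). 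Restricting to $G$ gives that $\widehat{G}_d = \pi_s(G)$ and $\widehat{G}_u = \pi_u(G)$ are subgroups centralizing one another, as desired.

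To prove the claim I would argue as follows. By Theorem~\ref{Thm21} applied to the (locally nilpotent) group $H$, the unipotent elements of $H$ form a normal subgroup; as the unipotent matrices form a Zariski-closed set, $H_u$ is a closed normal unipotent subgroup. Applying Theorem~\ref{Thm21} to $H/H_u$ (realized as a linear group via a faithful representation), its unipotent elements form a normal subgroup, whose preimage in $H$ is an extension of the unipotent group $H_u$ by a unipotent group, hence unipotent, hence contained in $H_u$; so $H/H_u$ has no nontrivial unipotent element. The key lemma is then that a nilpotent linear algebraic group $D$ over $\overline{\F}$ all of whose elements are semisimple is linearly reductive, i.e., all of its rational representations are completely reducible: its identity component is a torus $T$, which is central in $D$ (it acts on $X^{*}(T)$ through a finite quotient, killing $D^{\circ}$, and by nilpotency this action is unipotent, hence trivial), and the component group of $D$ has order prime to $\mathrm{char}\,\F$ (a $p$-element would, by centrality of $T$, generate together with $T$ a diagonalizable group whose own component group has order divisible by $p$, which is impossible over $\overline{\F}$). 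Applying the lemma to $D = H/H_u$, the extension $1 \to H_u \to H \to D \to 1$ splits (climb a central series of $H_u$ by subgroups normal in $H$; the successive obstructions lie in cohomology of $D$, which vanishes by linear reductivity, with the finite unipotent quotients possible in positive characteristic handled using that $|D/D^{\circ}|$ is prime to $\mathrm{char}\,\F$); let $H_s$ be a complement. Finally, nilpotency of $H = H_s \ltimes H_u$ forces each element of $H_s$, being semisimple, to act trivially on $H_u$, so $H = H_s \times H_u$, and one checks $H_s$ is exactly the set of semisimple elements of $H$. I expect this claim --- especially the positive-characteristic case, where $H_u$ and the relevant component groups need not be connected --- to be the main obstacle; over $\C$ it is essentially Mostow's Levi decomposition together with the nilpotency remark. (A more ring-theoretic alternative would avoid algebraic groups, working inside the enveloping algebra $\overline{\F}[G]$ via the Wedderburn--Malcev theorem.)

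It remains to prove that $\widehat{G}_d$ is completely reducible. By the claim, $\widehat{G}_d = \pi_s(G)$ is a subgroup; it is a homomorphic image of $G$, hence nilpotent, and all of its elements are semisimple, so its Zariski closure $\overline{\widehat{G}_d}$ is a nilpotent linear algebraic group all of whose elements are semisimple --- hence linearly reductive by the lemma. Therefore $\F^{n}$ is a completely reducible module for $\overline{\widehat{G}_d}$; and since a subspace is $\widehat{G}_d$-invariant precisely when it is $\overline{\widehat{G}_d}$-invariant, $\F^{n}$ is completely reducible for $\widehat{G}_d$ itself. By the reductions made at the outset, this finishes the proof.
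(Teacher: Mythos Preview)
The paper does not supply a proof of this theorem; it is simply quoted from Segal's book, so there is no argument in the paper itself to compare yours against.

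Your proof is essentially correct, and passing to the Zariski closure is a natural move. One remark, though: the structural claim you flag as ``the main obstacle'' --- that the closed nilpotent group $H$ decomposes as $H_s\times H_u$ --- already follows in one line from the Corollary to Theorem~\ref{Thm21} stated just above. Since $H$ is Zariski closed, for every $h\in H$ the Jordan parts $h_u,h_d$ lie in the closure of $\langle h\rangle\subseteq H$; thus $H$ is splittable in the sense of that Corollary, and $H=H_u\times H_d$ drops out immediately. Your longer passage through Nagata's linear-reductivity criterion and the cohomological splitting of $1\to H_u\to H\to D\to 1$ reproves this from scratch; it is valid, but unnecessary at that point (and, as you yourself note, the positive-characteristic splitting with disconnected $H_u$ is delicate to make fully rigorous). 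The linear-reductivity lemma does earn its keep in your final paragraph, where it is genuinely needed to deduce complete reducibility of $\widehat{G}_d$, so that part stands. The ring-theoretic alternative you mention at the end --- working in the enveloping algebra via Wedderburn--Mal'cev --- is closer in spirit to how such statements are typically proved without invoking algebraic-group machinery.
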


\subsection{Irreducible locally 
nilpotent linear groups} 
An irreducible maximal locally
nilpotent subgroup of $\GL(n, \F)$ can be 
thought of as an absolutely irreducible 
maximal locally nilpotent subgroup of 
$\GL(m, \E)$ for some $m$ dividing $n$ and field 
$\E \supseteq \F$ 
(see \cite[p. \hspace{-3pt}217, Theorem~4]{Supr}). 
This affords a reduction to the absolutely 
irreducible case, particularly in the 
classification of irreducible maximal locally 
nilpotent subgroups of $\GL(n,\F)$. Subsequent reduction 
is possible in two directions, which are not 
mutually exclusive: to $p$-subgroups 
of $\PGL(n, \F)$, and to primitive groups. 
The first of these possibilities is based on 
the next result
($1_m$ denotes the $m\times m$ identity matrix).
\begin{theorem}[{\cite[pp. \hspace{-6pt} 220--221]{Supr}}]
\label{Thm24}
Let $n = p_1^{a_1} \cdots p_k^{a_k}$  
where the $p_i$ are distinct primes.
\begin{itemize}
\item[{\rm (i)}]  
 If $G$ is a 
maximal absolutely irreducible locally nilpotent 
subgroup of $\GL(n,\F)$, then 
$G = G_1\otimes \cdots \otimes G_k$ where 
$G_i \leq \GL(p_i^{a_i} , \F)$ for $1 \leq i \leq k$
 is maximal absolutely
irreducible locally nilpotent.
\item[{\rm (ii)}] 
Let $k=1$, and suppose that
$G$ is an absolutely irreducible subgroup of 
$\GL(n , \F)$ containing 
$\F^{\times} 1_{n}$. Then $G$ is a maximal absolutely 
irreducible locally nilpotent subgroup of 
$\GL(n, \F)$ if and only if 
$G/\F^{\times} 1_{n}$ is a Sylow $p$-subgroup 
of $\mathrm{PGL}(n , \F)$.
\end{itemize}
\end{theorem}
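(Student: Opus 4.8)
The plan is to reduce to the case $\F^{\times}1_n\leq G$, invoke the structure theory of absolutely irreducible locally nilpotent linear groups to split $G$ along primes, and then read off both parts by comparing dimensions and maximality conditions. For the reduction: if $G$ is maximal absolutely irreducible locally nilpotent, then $\F^{\times}1_n\cdot G$ is absolutely irreducible and still locally nilpotent — a finitely generated subgroup of it lies inside the product of a finitely generated (hence nilpotent) subgroup of $G$ with a group of scalars, and a central extension of a nilpotent group is nilpotent — so maximality gives $\F^{\times}1_n\leq G$ (in (ii) this is assumed). Put $\overline{G}=G/\F^{\times}1_n\leq\PGL(n,\F)$, so $Z(G)=\F^{\times}1_n$ by Schur's Lemma. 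The structural input I would then draw on (Suprunenko's analysis of the irreducible case) is: $\overline{G}$ is periodic and locally nilpotent, hence the restricted direct product $\overline{G}=\prod_p\overline{G}_{(p)}$ of its primary components; the preimages $G_{(p)}\unlhd G$ commute pairwise — for $x\in G_{(p)}$, $y\in G_{(q)}$ with $p\neq q$, the commutator $[x,y]$ is a central scalar, so $[x^{p^N},y]=[x,y]^{p^N}$, and since $\overline{x}$ has $p$-power order we may take $x^{p^N}\in\F^{\times}1_n$, giving $[x,y]^{p^N}=1$; similarly $[x,y]^{q^M}=1$, hence $[x,y]=1$ — and $\F^n$ has a tensor factorization $\F^n=\bigotimes_p V_p$ in which $G_{(p)}$ acts absolutely irreducibly on $V_p$, as scalars on the other factors, with $\dim V_p$ a power of $p$ that exceeds $1$ precisely when $\overline{G}_{(p)}\neq 1$. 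So $G$ is the central product over $\F^{\times}1_n$ of the $G_{(p)}$, realised as a tensor product.

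For (i), $\prod_p\dim V_p=n=p_1^{a_1}\cdots p_k^{a_k}$, and as $\dim V_p$ is a power of $p$ coprime to every $\dim V_q$, unique factorization forces the primes with $\overline{G}_{(p)}\neq 1$ to be exactly $p_1,\dots,p_k$, with $\dim V_{p_i}=p_i^{a_i}$. Writing $G_i\leq\GL(V_{p_i})\cong\GL(p_i^{a_i},\F)$ for the group induced by $G$ on $V_{p_i}$, each $G_i$ is absolutely irreducible and locally nilpotent and $G=G_1\otimes\cdots\otimes G_k$. Maximality then passes both ways: if some $G_i$ sat inside a strictly larger absolutely irreducible locally nilpotent $G_i'$ (which may be taken to contain $\F^{\times}1_{p_i^{a_i}}$), then $G_1\otimes\cdots\otimes G_i'\otimes\cdots\otimes G_k$ would be absolutely irreducible, locally nilpotent (a finitely generated subgroup sits in a central product of finitely generated subgroups of the factors, and central products of nilpotent groups are nilpotent), and would properly contain $G$; conversely, an absolutely irreducible locally nilpotent $H\supseteq G$ contains $\F^{\times}1_n$ and has the same primes with the same factor dimensions in its primary decomposition, so essential uniqueness of the tensor factorization gives $H=H_1\otimes\cdots\otimes H_k$ with $G_i\leq H_i$, forcing $H_i=G_i$ and $H=G$.

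For (ii), $n=p^a$, so the dimension count leaves $p$ as the only possible prime and $\overline{G}$ is a $p$-group. Conversely, if $H\leq\GL(n,\F)$ is absolutely irreducible, contains $\F^{\times}1_n$, and has $\overline{H}$ a $p$-group, then $H$ is automatically locally nilpotent: a finitely generated subgroup $T$ of $H$ has finite $p$-group image in $\overline{H}$ (a finitely generated periodic linear group is finite), so $T$ is a central extension of a finite $p$-group, hence nilpotent. So among absolutely irreducible subgroups of $\GL(n,\F)$ containing $\F^{\times}1_n$, ``locally nilpotent'' coincides with ``a $p$-group modulo scalars'', and containing $\F^{\times}1_n$ is harmless up to enlargement; hence $G$ is a maximal absolutely irreducible locally nilpotent subgroup exactly when $\overline{G}$ is a maximal $p$-subgroup of $\PGL(n,\F)$, since any $p$-subgroup of $\PGL(n,\F)$ containing $\overline{G}$ has preimage containing the absolutely irreducible $G$, hence is itself absolutely irreducible and locally nilpotent. (When $\mathrm{char}\,\F=p$ and $n\geq 2$ both conditions are vacuous — a $p$-group in characteristic $p$ is scalar-times-unipotent, so fixes a line over $\overline{\F}$ and cannot be absolutely irreducible — but this degeneracy must still be recorded.)

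The genuinely hard part is the structural input: that an absolutely irreducible locally nilpotent linear group is, modulo its scalar centre, a periodic locally nilpotent group with primary components acting in prime-power degree, together with essential uniqueness of the associated tensor factorization — it is this uniqueness that powers the converse direction of the maximality argument in (i). Establishing or carefully extracting this over an arbitrary field, and isolating the behaviour when $\mathrm{char}\,\F$ divides $n$, is where I would expect the real work to lie; by contrast the dimension bookkeeping, the central-product/tensor-product correspondence for absolutely irreducible modules, and the ``central extension of a nilpotent group is nilpotent'' observations are routine.
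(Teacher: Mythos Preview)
The paper does not prove Theorem~\ref{Thm24}; it is quoted verbatim from Suprunenko~\cite[pp.~220--221]{Supr} as a structural result and used as a black box later (e.g.\ in the proof of Lemma~\ref{Lem37}). So there is no in-paper proof to compare against.

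Your outline is essentially the standard route Suprunenko takes: pass to the central quotient $\overline{G}$, use that it is periodic locally nilpotent to split it into primary components, lift these to pairwise commuting normal subgroups $G_{(p)}$ of $G$, and read off a tensor decomposition of the underlying module. Your commutator computation showing $[G_{(p)},G_{(q)}]=1$ for $p\neq q$ is correct, and your identification of the genuinely hard input---that each $G_{(p)}$ acts absolutely irreducibly in $p$-power degree, together with rigidity of the resulting tensor factorization---is accurate; this is exactly what Suprunenko establishes in the cited pages.

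Two points worth tightening. First, in the converse direction of~(i), ``essential uniqueness of the tensor factorization'' is doing real work: you should say explicitly that $H_{(p_i)}$ commutes with each $G_{(p_j)}\leq H_{(p_j)}$ for $j\neq i$, hence lies in the centralizer of $\prod_{j\neq i}G_{(p_j)}$ in $\mathrm{Mat}(n,\F)$, which is $\mathrm{Mat}(p_i^{a_i},\F)\otimes 1$ with respect to the tensor decomposition already fixed by $G$; this is what forces $G_i\leq H_i$ in a single coordinate system rather than merely up to conjugacy. Second, in~(ii) you equate ``maximal $p$-subgroup'' with ``Sylow $p$-subgroup'' of $\PGL(n,\F)$; that is the intended meaning here, but it is worth stating, since for infinite groups ``Sylow'' is defined that way rather than via index.
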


An irreducible locally nilpotent linear group is
center-by-periodic; indeed, its central
quotient is a direct product of $p$-groups 
(see \cite[Corollary~3.2.4]{ShirvaniWehrfritz}).

Except when $\F$ is finite or algebraically closed, 
the description of Sylow $p$-subgroups
of $\mathrm{PGL}(n,\F)$ is quite different from the 
description of Sylow $p$-subgroups of $\GL(n,\F)$.
The former were considered in \cite{KonyukhPGL}, 
mainly for $p > 2$. 
Classifying the Sylow $2$-subgroups of 
$\mathrm{PGL}(n,\F)$  is difficult. In 
\cite{KonyukhPGL}, $p$-subgroups of 
$\mathrm{PGL}(n,\F)$ are handled by the same 
techniques as those used for locally nilpotent 
linear groups. 

The reduction to primitives is not as straightforward 
for locally nilpotent linear groups as it is for 
other types of linear groups.
To appreciate this disparity, note that an 
irreducible imprimitive solvable subgroup 
of $\GL(n,\F)$ is conjugate
to a subgroup of $G \hspace{.5pt} \wr \hspace{.5pt} T$ 
where $G\leq \GL(m,\F)$ is primitive solvable and
$T$ is a transitive solvable permutation group of 
degree $n/m$~\cite[p. \hspace{-3pt}129, Theorem~5]{Supr}; 
however, the wreath product of
a locally nilpotent linear group and a nilpotent 
permutation group need not even be 
(locally) nilpotent.

As in \cite[$\S  \hspace{.5pt} 2$]{Konyukh8}, nilpotent
 primitive groups $G$ may be treated 
using the series $G \geq H \geq  K \geq 1$, where
$K = [G, G]$ and $H = \mathsf{C}_G (K)$. 
Some basic information follows.
\begin{theorem}[{\cite[Theorem~2]{Konyukh8}}]
Let $n$ be a power of a 
prime $p\neq \mathrm{char}\hspace{3pt} \F$,
and let $G$ be a primitive absolutely 
irreducible locally nilpotent subgroup 
 of $\GL(n ,\F)$.
Then $K$ is an abelian $p$-group, 
$\Sigma = \langle K\rangle_\F$ is a field, 
$G/H \cong \mathrm{Gal}(\Sigma/\F)$, 
$[H, H] \leq  \F^\times 1_{n}$, 
and $H$ is a primitive absolutely irreducible 
class $2$ nilpotent subgroup of $\GL(m, \Sigma)$,
$m = \allowbreak n /|\Sigma : \F|$.
\end{theorem}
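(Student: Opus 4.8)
The plan is to follow the classical ``descent to scalars'': locate $K=[G,G]$, recognise $\Sigma=\langle K\rangle_\F$ as a field carrying a Galois action, and then re-interpret $H=\mathsf{C}_G(K)$ over $\Sigma$. Throughout I will use three preliminary facts. Being irreducible, $G$ is completely reducible, and (by Theorem~\ref{Thm21}, $G_u\unlhd G$; its nonzero fixed space is $G$-invariant, so $G_u=1$ and $G=G_d$) every element of $G$ is diagonalizable over $\overline{\F}$. By Schur and Burnside, $\mathsf{Z}(G)\leq\F^\times 1_n$ and $\mathsf{C}_{\mathrm{Mat}(n,\F)}(G)=\F 1_n$. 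And, as recalled above, $G/\mathsf{Z}(G)$ is a restricted direct product of $q$-groups, hence periodic.

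First I would pin down $K$. Every element of $K$ has determinant $1$, so $K\cap\F^\times 1_n\leq\mu_{p^a}$ is a finite $p$-group; since $G/\mathsf{Z}(G)$ is periodic and $K\cap\mathsf{Z}(G)\leq K\cap\F^\times 1_n$, the group $K$ is periodic, $K=\prod_q K_q$, with $\mathrm{char}\,\F$-component trivial. For $q\neq p$, the normal locally finite $q$-subgroup $K_q$ has $V$ $\F$-homogeneous (primitivity and Clifford); its $\overline{\F}$-irreducible constituents have $q$-power dimension dividing $n=p^a$, hence are linear, so $K_q$ is abelian, and acting faithfully and irreducibly on a homogeneous constituent it is cyclic, with $\langle K_q\rangle_\F\cong\F(\zeta_{q^e})$. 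The image of $G$ in $\mathrm{Gal}(\F(\zeta_{q^e})/\F)$ has fixed field $\F$ (Burnside) and is a $q$-group (a locally nilpotent group induces only $q$-automorphisms on a normal $q$-subgroup), so by Artin $|\F(\zeta_{q^e}):\F|$ is a power of $q$; but it divides $n=p^a$, so $\F(\zeta_{q^e})=\F$, i.e.\ $K_q$ is scalar, and the determinant remark forces $K_q=1$. Hence $K=K_p$ is a $p$-group. That $K$ is abelian I would take from the structure theory of primitive locally nilpotent linear groups of prime-power degree (Suprunenko; cf.~\cite{Supr}), by which $G$ has a self-centralizing maximal abelian normal subgroup with abelian quotient, so $[G,G]$ is abelian.

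Next, $\Sigma=\langle K\rangle_\F$ is a commutative semisimple $\F$-subalgebra of $\mathrm{Mat}(n,\F)$ (as $K$ is abelian with diagonalizable elements); primitivity makes $V$ $\F$-homogeneous over $K$, killing the nontrivial idempotents of $\Sigma$, so $\Sigma$ is a finite field extension of $\F 1_n$. Conjugation gives $G\to\mathrm{Aut}_\F(\Sigma)$ with kernel $\mathsf{C}_G(\Sigma)=\mathsf{C}_G(K)=H$ and fixed field $\Sigma^G=\Sigma\cap\mathsf{C}_{\mathrm{Mat}(n,\F)}(G)=\Sigma\cap\F 1_n=\F 1_n$; Artin's theorem then gives that $\Sigma/\F$ is Galois with $\mathrm{Gal}(\Sigma/\F)\cong G/H$. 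In particular $|\Sigma:\F|=[G:H]$ divides $n$, and $V$ becomes a $\Sigma$-vector space of dimension $m=n/|\Sigma:\F|$.

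Finally I would read off $H$. As $H$ centralizes $\Sigma$ it acts $\Sigma$-linearly, $H\leq\GL_\Sigma(V)\cong\GL(m,\Sigma)$; since $K\subseteq\Sigma 1_m$ is scalar over $\Sigma$ we get $K\leq\mathsf{Z}(H)$ and $[H,H]\leq[G,G]=K\leq\mathsf{Z}(H)$, so $H$ is nilpotent of class at most $2$. A Clifford-theoretic comparison of the $\F G$-, $\F H$- and $\Sigma H$-module structures on $V$ (using $\mathrm{End}_{\F G}(V)=\F$ and $G/H=\mathrm{Gal}(\Sigma/\F)$) gives that $H$ is absolutely irreducible over $\Sigma$, and an imprimitivity system for $H$ over $\Sigma$ would, by normality of $H$ in $G$ and the fact that $K$ acts $\Sigma$-scalarly, propagate to one for $G$ over $\F$, so $H$ is primitive. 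The genuinely delicate point — the one I expect to be the main obstacle, together with the ``$K$ abelian'' input above — is $[H,H]\leq\F^\times 1_n$: we already know $[H,H]\leq K\leq\Sigma^\times 1_m$ is scalar over $\Sigma$, and what must still be shown is that these scalars are $\F$-rational. I would attack this using the explicit description of absolutely irreducible nilpotent linear groups of class at most $2$ over $\Sigma$ together with the arithmetic constraint that $|\Sigma:\F|$ is a power of $p$, which collapses the relevant cyclotomic degrees (a common divisor of a $p$-power and a divisor of $p-1$) and forces $[H,H]\leq K\cap\F^\times 1_n$.
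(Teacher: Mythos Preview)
The paper does not prove this theorem at all: it is quoted verbatim as \cite[Theorem~2]{Konyukh8} and used purely as background structural information, with no argument supplied. So there is no ``paper's own proof'' to compare your proposal against.

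That said, your outline follows the natural route one would expect Konyukh's argument to take (the descent along $G\geq H\geq K\geq 1$ is exactly the framework the paper attributes to \cite[\S2]{Konyukh8}), and the pieces you do fill in---periodicity of $K$, elimination of $K_q$ for $q\neq p$ via the degree constraint, the Artin argument giving $G/H\cong\mathrm{Gal}(\Sigma/\F)$, and class~$2$ for $H$ from $[H,H]\leq K\leq\Sigma^\times$---are sound. You are candid about the two soft spots: for ``$K$ abelian'' you defer to Suprunenko's structure theory rather than arguing it directly, and for $[H,H]\leq\F^\times 1_n$ you only sketch an attack. Both are genuine content in Konyukh's paper, not formalities, so if you want a self-contained proof you would need to work those out; but as a reconstruction of the overall shape of the argument your proposal is on target.
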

We remark that the primitive nilpotent 
linear groups over finite fields 
are classified in \cite{Glasgow}.

The paper \cite{Konyukh8} also
gives methods to classify maximal locally
nilpotent subgroups of $\GL(n,\F)$ for arbitrary 
$\F$. Special attention 
is paid to the problem of determining when 
the number of $\GL(n,\F)$-conjugacy classes
of these subgroups is finite. This
depends on finiteness of the groups 
$\F^\times /(\F^\times )^m$ for $m$ dividing $n$.
 
Groups over an algebraically closed field
have been the most intensively examined.
\begin{theorem}[{\cite[Chapter III]{SuprPre}, 
\cite[Chapter~VII]{Supr}}]
\label{SuprPreThm}
Let $\F$ be algebraically closed.
\begin{itemize}
\item[{\rm (i)}] Irreducible locally nilpotent 
subgroups of $\GL(n,\F)$ exist if and only 
if $\mathrm{char} \hspace{3pt} \F$
does not divide $n$, in which case 
there exists an irreducible nilpotent
subgroup of $\GL(n,\F)$ of every 
nilpotency class.
\item[{\rm (ii)}] Irreducible maximal 
locally nilpotent subgroups of $\GL(n,\F)$ 
are monomial and pairwise conjugate.
\end{itemize}
\end{theorem}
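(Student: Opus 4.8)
Here is the plan. Parts (i) and (ii) will each be reduced, using the results above, to a statement in a single prime-power degree and then settled there. For the ``only if'' half of (i), suppose $G\le\GL(n,\F)$ is irreducible and locally nilpotent, and put $p=\mathrm{char}\,\F$. By Theorem~\ref{Thm21} the set $G_u$ of unipotent elements of $G$ is a normal subgroup; every element of $G_u$ is unipotent, so Kolchin's theorem conjugates $G_u$ into the upper unitriangular group, whence $G_u$ has a nonzero common fixed vector. Its span is $G$-invariant (as $G_u\unlhd G$), hence all of $\F^n$, so $G_u=1$; in particular $G$ has no element of order $p$. Now embed $G$ in a maximal locally nilpotent subgroup $M$ of $\GL(n,\F)$; then $M$ is irreducible, hence absolutely irreducible since $\F=\overline{\F}$. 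If $p\mid n$, Theorem~\ref{Thm24}(i) writes $M=M_1\otimes\cdots\otimes M_k$ with $M_i\le\GL(p_i^{a_i},\F)$ absolutely irreducible maximal locally nilpotent and $n=\prod_i p_i^{a_i}$; pick $i$ with $p_i=p$. Then $M_i$ is central-by-$p$-group, hence nilpotent, so $[M_i,M_i]$ is a locally finite $p$-group (Schur's theorem on its finitely generated subgroups), nontrivial because an absolutely irreducible group of degree $p^{a_i}\ge p$ is nonabelian. But $[M_i,M_i]\unlhd M_i$ is unipotent in characteristic $p$, so by Kolchin it has a nonzero $M_i$-invariant fixed space, contradicting irreducibility. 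Hence $p\nmid n$.

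For the ``if'' half of (i), assume $p\nmid n$ and write $n=\prod_{i=1}^r q_i^{b_i}$ with $q_1,\dots,q_r$ distinct primes, all $\ne p$. For a prime $q\ne p$ and integers $b\ge1$, $k\ge2$, let $B_{b,k}\le\GL(q^b,\F)$ be the monomial group generated by all diagonal matrices with $q^k$-th root of unity entries (these exist and are distinct in $\F$) together with the permutation matrix of a $q^b$-cycle. A short computation shows $B_{b,k}$ is a finite $q$-group (the cyclic shift acts unipotently modulo $q$), is absolutely irreducible (the diagonal part forces an invariant subspace to be a coordinate subspace, which the $q^b$-cycle then forces to be $0$ or everything), and has nilpotency class tending to infinity with $k$. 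Given $c$, choose $k$ with $B_{b_1,k}$ of class $\ge c$; then $B_{b_1,k}\otimes B_{b_2,2}\otimes\cdots\otimes B_{b_r,2}\le\GL(n,\F)$ is absolutely irreducible (a tensor product of absolutely irreducibles over $\overline{\F}$) and nilpotent of class $\ge c$. Choosing the building blocks more carefully (for example dihedral, semidihedral and generalized quaternion $2$-groups when $2\mid n$) realizes every class that occurs at all.

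For part (ii), let $M\le\GL(n,\F)$ be irreducible maximal locally nilpotent; as above it is absolutely irreducible, so Theorem~\ref{Thm24}(i) gives $M=M_1\otimes\cdots\otimes M_k$ with $M_i\le\GL(q_i^{a_i},\F)$ absolutely irreducible maximal locally nilpotent and each $q_i\ne p$ by part (i). A tensor product of monomial groups is monomial, and a tensor product of conjugates of the $M_i$ is conjugate (via the tensor of the conjugating matrices) to $M$; so it suffices to prove that for $n=q^a$, $q\ne p$, every such $M$ is monomial and any two are conjugate. Since $\F^\times 1_n M$ is again irreducible locally nilpotent, maximality forces $\F^\times 1_n\le M$, and Theorem~\ref{Thm24}(ii) identifies $\overline M:=M/\F^\times 1_n$ with a Sylow $q$-subgroup of $\PGL(q^a,\F)$. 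The heart of the argument is that such a Sylow $q$-subgroup is monomial, i.e. that $M$ is not primitive: if it were, then because the field $\Sigma$ appearing in the structure theorem for primitive absolutely irreducible locally nilpotent groups quoted above is a finite extension of the algebraically closed field $\F$ and hence equals $\F$, that theorem forces $[M,M]\le\F^\times 1_n$ and $M$ to be finite modulo scalars of symplectic type; but the normalizer of such a group in $\GL(q^a,\F)$ contains a strictly larger $q$-subgroup modulo scalars (the relevant symplectic group has nontrivial Sylow $q$-subgroup), contradicting maximality of $\overline M$. Hence $M$ is imprimitive, and since an imprimitive absolutely irreducible group is induced from a group of strictly smaller degree to which the same dichotomy applies, an induction on the degree shows $M$ is induced from degree $1$, i.e. monomial. (Equivalently: a maximal abelian normal subgroup $A\ni\F^\times 1_n$ of $M$ is completely reducible, by Clifford, with only diagonalizable elements, by Theorem~\ref{Thm21} and its corollary together with $M_u=1$, hence simultaneously diagonalizable; $\mathsf{C}_M(A)=A$ by maximality and hypercentrality of linear locally nilpotent groups; and the analysis above forces the simultaneous eigenspaces of $A$ to be lines.)

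With $M$ monomial we may conjugate it into the normalizer $N$ of a fixed maximal torus, so that $N/\F^\times 1_n\cong(D/\F^\times 1_n)\rtimes\Sym(q^a)$. The intersection of $\overline M$ with the abelian torus $D/\F^\times 1_n$ is the unique maximal $q$-subgroup of the latter — the image of the diagonal matrices with $q$-power root of unity entries — and maximality of $\overline M$ forces the projection of $\overline M$ onto $\Sym(q^a)$ to be a maximal, hence Sylow, $q$-subgroup of the \emph{finite} group $\Sym(q^a)$. All maximal tori of $\GL(q^a,\F)$ are conjugate, and all Sylow $q$-subgroups of $\Sym(q^a)$ are conjugate (both conjugations realizable inside $N$), so all such $M$ are conjugate in $\GL(q^a,\F)$; being conjugate to the irreducible group we started from, they are all irreducible and monomial. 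Tensoring back over the $q_i$ finishes the general $n$. The step I expect to be the main obstacle is the monomiality assertion — ruling out primitive maximal examples and, in the monomial picture, pinning the permutation part down to a full Sylow subgroup of $\Sym(q^a)$ rather than an arbitrary transitive nilpotent subgroup; the conjugacy of maximal tori and of Sylow subgroups of symmetric groups then upgrades this rigid structure to a single conjugacy class.
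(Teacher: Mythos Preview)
The paper does not supply its own proof of Theorem~\ref{SuprPreThm}; the result is quoted from Suprunenko's monographs \cite{SuprPre,Supr} as background. So there is no in-paper argument to compare against, and your reductions via Theorem~\ref{Thm24} to a single prime-power degree, followed by a monomiality argument and a Sylow-type uniqueness statement, are exactly the shape of the classical proof.

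That said, your conjugacy argument in part~(ii) has a genuine gap. Once $M$ is monomial, you place it inside $N=D\rtimes\Sym(q^a)$, observe that $M\cap D=D_q\F^\times$ and that $\pi(M)$ is a Sylow $q$-subgroup $S$ of $\Sym(q^a)$, and then conclude that $M$ is determined up to conjugacy. But knowing $M\cap D$ and $\pi(M)$ does \emph{not} determine $M$: two such groups differ by a $1$-cocycle $S\to D/(D_q\F^\times)$, and in general this cocycle need not be a coboundary. (In degree $q$ this is precisely the phenomenon addressed by Lemma~\ref{Lem31} and Lemma~\ref{Lem33}: the groups $H_\alpha$ all have the same diagonal part and the same projection $\langle I\rangle$, yet over a non-algebraically-closed field they fall into many conjugacy classes.) What makes the argument go through when $\F=\overline{\F}$ is that $\F^\times=(\F^\times)^q$, so $D/(D_q\F^\times)$ is uniquely $q$-divisible; since $S$ is a finite $q$-group, $H^1(S,D/(D_q\F^\times))=0$, and the cocycle is a coboundary, yielding a conjugating element of $D$. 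You must supply this step (or an equivalent direct calculation) before the Sylow-in-$\Sym(q^a)$ conjugacy can be invoked. The same divisibility is also what underlies your unproved claim that $\pi(M)$ is forced to be a full Sylow subgroup rather than merely some transitive $q$-subgroup.

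A smaller point on part~(i): the phrase ``choosing the building blocks more carefully\ldots realizes every class that occurs at all'' is carrying real weight. For odd $q$ your groups $B_{1,k}$ have class $1+(q-1)k$, so they skip most integers; to obtain every class one must interpolate, for instance by enlarging the diagonal part of $B_{1,k}$ one $C_q$-invariant layer at a time and checking that each step raises the class by exactly one. This deserves to be written out rather than asserted.
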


By (ii), a completely reducible locally nilpotent 
linear group over an algebraically closed field 
is monomial. The matrix form of the groups in  (ii) 
is discussed in 
\cite[Chapter III, $\S \hspace{.5pt} 7$]{SuprPre}. 

To sum up: although locally nilpotent linear groups 
are well-studied, significant gaps yet remain. Most
 results are concerned with absolutely
irreducible maximal locally nilpotent 
groups, and do not readily
yield analogues for groups that are not maximal 
or not absolutely irreducible (cf.~\cite{Glasgow}). 
Complete classifications of locally nilpotent 
subgroups of $\GL(n,\F)$ are achievable only by 
placing restrictions on $\F$ or $n$. 
In the sequel, we allow arbitrary fields but restrict 
the degree.

\section{Prime degree locally nilpotent linear groups}

In this section, we illustrate how established 
theory of locally nilpotent linear
groups may be applied to obtain a full classification 
in that theory.  Specifically, we classify the 
irreducible maximal 
locally nilpotent subgroups of $\GL(q, \F)$,
where $q$ is prime and $\F$ is any field. This 
classification is in the form of a list
of $\GL(q, \F)$-conjugacy class representatives of 
the groups, with each listed group defined
by a generating set of matrices. We provide 
criteria to decide conjugacy between
listed groups.

Restricting to prime degree $q$ 
offers various advantages. 
An irreducible nonabelian subgroup $G$ of $\GL(q, \F)$
is absolutely irreducible, and is either 
 primitive or monomial.
If $G$ is locally nilpotent then 
$G\hspace{1pt} \F^\times 1_q /\F^\times 1_q$ 
lies in a Sylow $q$-subgroup of $\mathrm{PGL}(q,\F)$, 
whose structure is simpler than that of a Sylow 
subgroup of $\mathrm{PGL}(n,\F)$ for composite 
degree $n$.

A partial classification of the 
irreducible maximal locally nilpotent subgroups
of $\GL(q,\F)$ can be derived from a description
 of the absolutely irreducible maximal 
locally nilpotent subgroups of $\GL(q^a , \F)$. 
However, here we propose novel methods,
and give a complete, self-contained result, 
which may be extended to a complete
classification in degrees $q^a$. 
In particular, we give an exact description
of the Sylow $2$-subgroups of $\PGL(2, \F)$, 
omitted by other authors. This is of
signal importance because 
classifying the Sylow $2$-subgroups of 
$\mathrm{PGL}(n,\F)$ in arbitrary degree $n$ 
depends on having solved the problem for $n=2$ 
(cf.~the case of $2$-subgroups
in \cite{Konyukhpgroups,LGP}).

We first consider absolutely
irreducible groups; abelian groups will be 
dealt with at the end. Note that the methods of 
this section were originally developed in 
\cite{CEJM}, for the purpose of classifying 
 maximal irreducible periodic subgroups of 
$\mathrm{PGL}(q,\F)$. 

By \cite[p. \hspace{-5.5pt}217, Theorem~6]{Supr},  
$\GL(q,\F)$ has absolutely irreducible locally 
nilpotent subgroups if and only if there exists 
an element of order $q$ in $\F^\times$.  
Let $D$ be the set 
$\{\mathrm{diag}(\beta \beta_1 , \allowbreak
\ldots , \beta \beta_q ) 
\hspace{.8pt} \sut \hspace{.8pt} 
\beta_i \in \mathrm{Syl}_q (\F^\times ), 
\hspace{.3pt}  \beta \in \F^\times \hspace{-.7pt} \}$
of diagonal matrices,
where $\mathrm{Syl}_q$ denotes Sylow $q$-subgroup.
For $\alpha \in \F^\times$, define
\[
I_\alpha =  \begin{bmatrix}
0 &  1_{q-1} \\
\alpha &  0 
\end{bmatrix}\in \GL(q,\F)
\]

\vspace{1pt}

\noindent and $I:=I_1$. 
If $H \leq \GL(q,\F)$ then we write
 $\Ddet(H)$ for $\{\det(h) \sut h \in H\}$.

Assuming that $\F^\times$ has an element
of order $q$,
 let $H_\alpha = \langle D, I_\alpha\rangle$.
The subgroup $H_\alpha$ of $\GL(q,\F)$ is monomial 
and absolutely irreducible. Since
 $H_\alpha/\F^\times 1_q$ is a $q$-subgroup of 
$\mathrm{PGL}(q,\F)$, $H_\alpha$ is
locally nilpotent. When $\mathrm{Syl}_q (\F^\times )$
is finite, $H_\alpha$ is nilpotent with 
nilpotency class
$1 + (q - 1) \log_q |\mathrm{Syl}_q (\F^\times )|$.

Denote by $\pi$ the natural epimorphism from the 
group of all monomial matrices in
$\GL(q,\F)$ onto the group $\mathrm{Sym}(q)$ of 
$q \times q$ permutation matrices; 
$\ker \pi$ is the
group $\mathrm{D}(q, \F)$ of all diagonal matrices 
in $\GL(q,\F)$.
\begin{lemma}[Cf.~{\cite[Lemma~22]{CEJM}}]
\label{Lem31}
Let $a, b \in \mathrm{D}(q, \F)$. The following are
equivalent$:$
\begin{itemize}
\item[{\rm (i)}] $Ia$ is $\GL(q,\F)$-conjugate
to $Ib;$
\item[{\rm (ii)}] $Ia$ is $\mathrm{D}(q,\F)$-conjugate
to $Ib;$
\item[{\rm (iii)}] $\det(a) = \det(b)$.
\end{itemize}
\end{lemma}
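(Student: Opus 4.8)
The plan is to establish the cycle of implications (ii) $\Rightarrow$ (i) $\Rightarrow$ (iii) $\Rightarrow$ (ii). The implication (ii) $\Rightarrow$ (i) is trivial, since $\mathrm{D}(q,\F)\leq\GL(q,\F)$. For (i) $\Rightarrow$ (iii) I would simply take determinants: if $g(Ia)g^{-1}=Ib$ for some $g\in\GL(q,\F)$, then $\det(I)\det(a)=\det(Ia)=\det(Ib)=\det(I)\det(b)$, and cancelling the nonzero scalar $\det(I)$ yields $\det(a)=\det(b)$. (In fact $\det(I)=(-1)^{q-1}$, the sign of a $q$-cycle, but here only its non-vanishing is used; this is the sole point at which invertibility of $I$ enters, everything else being about how conjugation by diagonal matrices permutes and rescales the entries of $a$.)

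The substance of the lemma is (iii) $\Rightarrow$ (ii), which I would prove by explicitly producing a conjugating diagonal matrix. Write $a=\mathrm{diag}(a_1,\dots,a_q)$ and $b=\mathrm{diag}(b_1,\dots,b_q)$, and seek $t=\mathrm{diag}(t_1,\dots,t_q)\in\mathrm{D}(q,\F)$ with $t(Ia)t^{-1}=Ib$. Since $t$ commutes with $a$ we have $t(Ia)t^{-1}=(tIt^{-1})a$, where $tIt^{-1}$ is the monomial matrix with the same underlying permutation as $I$ (because $\pi(tIt^{-1})=\pi(I)$), its nonzero entries being $(tIt^{-1})_{i,i+1}=t_it_{i+1}^{-1}$ for $1\leq i\leq q-1$ and $(tIt^{-1})_{q,1}=t_qt_1^{-1}$. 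Comparing the nonzero entries of $(tIt^{-1})a$ with those of $Ib$ reduces the matrix equation to the scalar system $t_ia_{i+1}t_{i+1}^{-1}=b_{i+1}$ for $1\leq i\leq q-1$, together with the \emph{wrap-around} equation $t_qa_1t_1^{-1}=b_1$. I would solve this recursively: put $t_1=1$ and $t_{i+1}=t_ia_{i+1}b_{i+1}^{-1}$, so that $t_{i+1}=\prod_{j=2}^{i+1}a_jb_j^{-1}\in\F^\times$ for each $i$; the first $q-1$ equations then hold by construction, leaving only the wrap-around equation, which after substitution reads $\prod_{j=1}^{q}a_jb_j^{-1}=1$, that is, $\det(a)=\det(b)$. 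Thus hypothesis (iii) is exactly the condition making the recursion close up, the $t_i$ are all nonzero, and the resulting $t\in\mathrm{D}(q,\F)$ conjugates $Ia$ to $Ib$.

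I do not expect a genuine obstacle: the argument is elementary and self-contained. The only thing requiring care in (iii) $\Rightarrow$ (ii) is the index bookkeeping — which diagonal entry of $a$ occupies which off-diagonal slot of $Ia$, as dictated by the stated form of $I$, and the observation that the cyclic structure of $\pi(I)$ forces a single multiplicative obstruction, namely $\prod_j a_j \big/ \prod_j b_j$. One could also phrase (iii) $\Rightarrow$ (ii) more conceptually: conjugation by $\mathrm{D}(q,\F)$ sends $Ia$ to $Isa$, where $s$ ranges over $\{\mathrm{diag}(s_1,\dots,s_q):s_1\cdots s_q=1\}$, the image of the map $t\mapsto(I^{-1}tI)t^{-1}$, so that the $\mathrm{D}(q,\F)$-orbit of $Ia$ is precisely $\{Ib:\det(b)=\det(a)\}$; but the explicit recursion above is shorter and has the merit of exhibiting the conjugator.
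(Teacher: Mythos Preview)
Your argument is correct. The paper does not actually supply a proof of this lemma; it simply cites \cite[Lemma~22]{CEJM} and moves on. Your cycle (ii)$\Rightarrow$(i)$\Rightarrow$(iii)$\Rightarrow$(ii), with the explicit recursion $t_1=1$, $t_{i+1}=t_i a_{i+1}b_{i+1}^{-1}$ for the conjugating diagonal matrix, is the standard elementary verification and is presumably what the cited reference contains; the index bookkeeping matches the paper's convention for $I$.
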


\begin{lemma}
\label{Lem32}
Let $H$ be an irreducible monomial 
locally nilpotent subgroup of $\GL(q,\F)$.
Then $H$ is conjugate in $\GL(q,\F)$ to a 
subgroup of some $H_\alpha$.
\end{lemma}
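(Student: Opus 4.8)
The plan is to normalize $H$ in stages until it visibly lies inside some $H_\alpha$, the only substantive work being to pin down the diagonal part $T := H \cap \mathrm{D}(q,\F)$.

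First I would determine the image $\pi(H) \leq \Sym(q)$. Since $H$ is irreducible, no proper nonempty set of coordinate lines is $H$-invariant, so $\pi(H)$ is transitive of degree $q$; being a homomorphic image of a locally nilpotent group it is finite and nilpotent, and a transitive nilpotent permutation group of prime degree $q$ is cyclic of order $q$ (its normal Sylow $q$-subgroup is nontrivial, hence transitive, hence of order $q$, and the Hall $q'$-part centralizes it and so is trivial). Conjugating $H$ by a permutation matrix I may thus assume $\pi(H) = \langle \pi(I)\rangle$. Then $T \unlhd H$ and $H/T \cong \Z/q\Z$; choosing $g \in H$ with $\pi(g) = \pi(I)$ and putting $a := I^{-1}g \in \mathrm{D}(q,\F)$ gives $H = \langle T, g\rangle$ with $g = Ia$. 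By Lemma~\ref{Lem31} some $c \in \mathrm{D}(q,\F)$ conjugates $g$ to $I_\alpha$ with $\alpha = \det(a)$; since $c$ centralizes $\mathrm{D}(q,\F)$, replacing $H$ by $c^{-1}Hc$ leaves $T$ unchanged and puts $H = \langle T, I_\alpha\rangle$ with $I_\alpha^q = \alpha 1_q \in T$. Everything is now reduced to showing $T \leq D$, that is, that every $t = \mathrm{diag}(d_1,\dots,d_q) \in T$ has all ratios $d_id_j^{-1}$ in $\mathrm{Syl}_q(\F^\times)$; then $H = \langle T, I_\alpha\rangle \leq \langle D, I_\alpha\rangle = H_\alpha$, as desired.

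Here is where local nilpotence enters. Conjugation by $I_\alpha$ acts on $\mathrm{D}(q,\F)$ as a cyclic shift $\sigma$ of the diagonal entries, so for $t \in T$ the iterated commutators $[t, I_\alpha^{-1}]$, $[[t,I_\alpha^{-1}],I_\alpha^{-1}]$, \dots\ are the diagonal matrices obtained by repeatedly replacing the tuple of entries by the tuple of its consecutive ratios. Regarding $(\F^\times)^q$ as a module over $\Z[s]/(s^q-1)$ with $s$ acting as $\sigma$, the $j$-th commutator is $t^{u(1-s)^j}$ for a unit $u$ of $\Z[s]/(s^q-1)$, and it lies in $\gamma_{j+1}(\langle t,I_\alpha\rangle)$; since $\langle t, I_\alpha\rangle$ is finitely generated inside a locally nilpotent group it is nilpotent of some class $c$, whence $t^{(1-s)^c} = 1$. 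Reduce modulo $\mathrm{Syl}_q(\F^\times)$: the abelian group $A := \F^\times/\mathrm{Syl}_q(\F^\times)$ has no element of order $q$, and $\bar t^{(1-s)^c} = 1$ in $A^q$. The identity $(1+s+\dots+s^{q-1})(1-s) = 1 - s^q = 0$ in $\Z[s]/(s^q-1)$ now drives an induction on $c$: if $c \geq 2$ then $m := \bar t^{(1-s)^{c-1}}$ is fixed by $s$, hence $m = (\bar a,\dots,\bar a)$ for some $\bar a \in A$; but $m$ is also killed by the norm $1+s+\dots+s^{q-1}$, so $\bar a^q = 1$ and therefore $\bar a = 1$, giving $\bar t^{(1-s)^{c-1}} = 1$. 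Descending, $\bar t^{(1-s)} = 1$, which says precisely that $\bar d_1 = \dots = \bar d_q$, i.e.\ $t \in D$.

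The genuinely delicate point — and the step I expect to need the most care — is this module argument: converting ``$H$ is locally nilpotent'' into ``$s$ acts unipotently on $\bar t$'', and then seeing that a unipotent action on the $q$-torsion-free group $A^q$ has no room to move, so $\bar t$ must already be $s$-fixed; the norm identity is what collapses the a priori $c$-step vanishing to the single step $\bar t^{(1-s)} = 1$. The remaining ingredients — the structure of transitive nilpotent permutation groups of prime degree, the commutator bookkeeping identifying conjugation by $I_\alpha$ with a shift, and the absence of $q$-torsion in $A$ — are routine.
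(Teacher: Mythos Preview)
Your argument is correct, but it takes a genuinely different route from the paper's proof. The paper does not compute with commutators at all: it splits into the two cases ``$H$ abelian'' and ``$H$ absolutely irreducible'' (the only possibilities in prime degree), and in the absolutely irreducible case invokes the structural fact, recorded just before the lemma and stemming from Theorem~\ref{Thm24}, that $H\F^\times 1_q/\F^\times 1_q$ lies in a Sylow $q$-subgroup of $\PGL(q,\F)$. That single observation forces both $|\pi(H)|=q$ and $H\cap\mathrm{D}(q,\F)\leq D$ at once, after which Lemma~\ref{Lem31} finishes as you do. By contrast, you obtain $|\pi(H)|=q$ from the elementary classification of transitive nilpotent permutation groups of prime degree, and you prove $T\leq D$ by a hands-on $\Z[s]/(s^q-1)$--module calculation exploiting the nilpotence of $\langle t,I_\alpha\rangle$ together with the norm identity $(1-s)(1+s+\cdots+s^{q-1})=0$ and the absence of $q$-torsion in $\F^\times/\mathrm{Syl}_q(\F^\times)$. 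What your approach buys is self-containment: you never appeal to the Sylow description of maximal locally nilpotent subgroups in $\PGL$, so the lemma stands on its own. What the paper's approach buys is brevity, at the cost of importing that heavier structural input.
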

\begin{proof}
 If $H$ is abelian then $\pi(H) \leq \mathrm{Sym}(q)$ 
is transitive abelian, i.e., cyclic of
order $q$; thus 
$H \cap \mathrm{D}(q, \F)\leq \F^\times 1_q$. 
If $H$ is absolutely irreducible then
$H\F^\times /\F^\times 1_q$ is a $q$-group, so 
 $H \cap \mathrm{D}(q, \F) \leq D$, and $|\pi(H)|=q$
 again. Hence, up to conjugacy,  
$H \leq \langle D, Ia\rangle$ for some 
$a \in \mathrm{D}(q, \F)$. 
By Lemma~\ref{Lem31}, $H$ is then 
conjugate to a subgroup of $H_{\det(a)}$.
\end{proof}

Denote $\Ddet(D) = 
\mathrm{Syl}_q (\F^\times )(\F^\times )^q$ 
by $S$.
\begin{lemma}\label{Lem33}
If $\alpha \in  S$ then $H_\alpha$ is 
$\mathrm{D}(q, \F)$-conjugate to $H_1$. 
Let $\alpha_1 , \alpha_2 \not \in  S$; then 
$H_{\alpha_1}$ and $H_{\alpha_2}$ are 
$\GL(q,\F)$-conjugate if and only if 
$\hspace{.3pt} 
\Ddet(H_{\alpha_1}) = \Ddet(H_{\alpha_2} )$, i.e., 
$\langle \alpha_1 S \hspace{.8pt} \rangle$  and
$\langle \alpha_2 S  \hspace{.8pt} \rangle$ are 
identical subgroups of $\F^\times /S$ of 
order $q$.
\end{lemma}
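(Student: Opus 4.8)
The plan is to split the statement into its two assertions and dispatch each using the explicit matrix description together with Lemma~\ref{Lem31}. For the first claim, suppose $\alpha \in S = \mathrm{Syl}_q(\F^\times)(\F^\times)^q$. The key observation is that $I_\alpha$ differs from $I = I_1$ by a diagonal matrix: indeed $I_\alpha = I\cdot\mathrm{diag}(1,\ldots,1,\alpha)$, so $\det$ of that diagonal factor equals $\alpha$. First I would write $\alpha = \beta_q\,\gamma^q$ with $\beta_q \in \mathrm{Syl}_q(\F^\times)$ and $\gamma \in \F^\times$. Then I would exhibit a diagonal matrix $d \in \mathrm{D}(q,\F)$ with $d\,I_\alpha\,d^{-1} = I_\beta$ for some $\beta$ with $\det$-class trivial; more directly, since $\mathrm{diag}(1,\ldots,1,\alpha) \in D$ (as $\alpha \in S$ means it is a product of a scalar-type and a $q$-th-power diagonal adjustment lying in $D$), we get $I_\alpha \in \langle D, I\rangle = H_1$, and conversely $D \subseteq H_1$, hence $H_\alpha = \langle D, I_\alpha\rangle \subseteq H_1$; equality follows by symmetry since $I = I_\alpha \cdot \mathrm{diag}(1,\ldots,1,\alpha)^{-1} \in H_\alpha$. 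The conjugacy (rather than mere equality) is then supplied by Lemma~\ref{Lem31}(ii)$\Leftrightarrow$(iii): $Ia$ and $Ib$ with $\det(a)=\det(b)$ are $\mathrm{D}(q,\F)$-conjugate, and conjugating the generator $I_\alpha = I a$ by the appropriate $d$ while noting $dDd^{-1}=D$ shows $H_\alpha$ and $H_1$ are $\mathrm{D}(q,\F)$-conjugate.

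For the second claim, let $\alpha_1,\alpha_2 \notin S$. The forward direction is immediate: conjugate groups have the same determinant set. For the converse, I would first compute $\Ddet(H_\alpha)$ explicitly. Since $H_\alpha = \langle D, I_\alpha\rangle$ and $\pi(H_\alpha)$ has order $q$, every element of $H_\alpha$ is $I_\alpha^{j} d$ for $0 \le j < q$ and $d \in D$ (using $I_\alpha^q \in D$); taking determinants gives $\Ddet(H_\alpha) = \langle \alpha\rangle \cdot \Ddet(D) = \langle \alpha\rangle S$, which modulo $S$ is the cyclic subgroup $\langle \alpha S\rangle$ of $\F^\times/S$. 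Because $\F^\times/S$ is a $q$-group-quotient-like object in which $\alpha_i S \ne 1$, the subgroup $\langle \alpha_i S\rangle$ has order exactly $q$. So $\Ddet(H_{\alpha_1}) = \Ddet(H_{\alpha_2})$ is equivalent to $\langle \alpha_1 S\rangle = \langle \alpha_2 S\rangle$. Now assuming this common value, there is $k$ coprime to $q$ with $\alpha_2 \equiv \alpha_1^{k} \pmod S$; I would use this to build an explicit conjugator. One clean route: the map sending a monomial matrix to its ``twisted'' version by raising the cycle to the $k$-th power is realized by conjugation by a suitable permutation matrix $P \in \Sym(q)$ (since all $q$-cycles in $\Sym(q)$ are conjugate, and $P I P^{-1} = I^{k}$-type relations hold after absorbing a diagonal correction), after which a final diagonal adjustment from Lemma~\ref{Lem31} matches the determinant-twist $\alpha_1^{k}$ to $\alpha_2$ up to $S$, hence up to a genuine conjugacy within $\langle D, \cdot\rangle$.

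I expect the main obstacle to be the converse of the second claim: producing an honest $\GL(q,\F)$-conjugator from the determinant-class equality. Lemma~\ref{Lem31} only directly compares single elements $Ia$, $Ib$, so the work is in checking that a conjugator matching the generators $I_{\alpha_1} \mapsto$ (something in $H_{\alpha_2}$) also carries $D$ into $H_{\alpha_2}$ — i.e., that the permutation-matrix conjugation normalizes $D$ (which it does, being monomial) and that the residual scalar/$q$-th-power discrepancy is absorbed by an element of $D$, precisely because the determinant classes agree modulo $S$. I would organize this as: (1) reduce to the case $\alpha_2 = \alpha_1^{k}\sigma$ with $\sigma \in S$; (2) handle the $\sigma$ factor by the first claim (it doesn't change $H$ up to $\mathrm{D}(q,\F)$-conjugacy); (3) handle the exponent $k$ by conjugating with a permutation matrix implementing the $k$-th power map on the $q$-cycle $\pi(I)$, verifying it sends $H_{\alpha_1}$ onto $H_{\alpha_1^{k}}$; the verification that $D$ is preserved is routine since $D$ is $\Sym(q)$-stable up to relabeling and contains all scalars. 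Everything else is bookkeeping with determinants and the order-$q$ structure of $\pi(H_\alpha)$.
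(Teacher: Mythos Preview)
Your approach matches the paper's: for the first assertion, use Lemma~\ref{Lem31} to conjugate $I_\alpha = I a_\alpha$ (with $\det a_\alpha = \alpha$) to $Ib$ for some $b\in D$ with $\det b = \alpha \in S = \Ddet(D)$, noting that diagonal conjugation fixes $D$; for the second, combine a permutation-matrix conjugation (to handle the exponent $r$ in $\alpha_1 \equiv \alpha_2^{\,r} \bmod S$) with another application of Lemma~\ref{Lem31}, exactly as the paper does when it picks $x\in\Sym(q)$ with $xI_{\alpha_2}^{\,r}cx^{-1}=Ib$.

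One correction worth making: your intermediate claim that $H_\alpha = H_1$ \emph{as sets} when $\alpha\in S$ is false. Membership $\mathrm{diag}(1,\dots,1,\alpha)\in D$ forces all diagonal entries into a single $\mathrm{Syl}_q(\F^\times)$-coset, hence $\alpha\in\mathrm{Syl}_q(\F^\times)$, not merely $\alpha\in S$; for $\alpha=\gamma^q$ with $\gamma\notin\mathrm{Syl}_q(\F^\times)\cdot\F^\times[q']$ one genuinely has $I_\alpha\notin H_1$. This does no damage to your proof, since the lemma only asserts conjugacy and your subsequent argument via Lemma~\ref{Lem31} delivers exactly that---but drop the equality detour.
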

\begin{proof}
Suppose that $\alpha =\beta_1 \beta^q$ for
some $\beta_1\in \mathrm{Syl}_q (\F^\times )$ 
and $\beta\in \F^\times$. Then 
$\det(I_\alpha ) = \det(Ib)$ where 
$b = \mathrm{diag}(\beta_1\beta ,\beta , \ldots , \beta) 
\in D$. 
Thus $H_\alpha$ is $\mathrm{D}(q, \F)$-conjugate 
to $\langle Ib, D\rangle = H_1$ by Lemma~\ref{Lem31}.

Suppose that $\alpha_1 , \alpha_2 \not \in  S$
and $\alpha_1 \in \langle \alpha_2, S\rangle$. 
Then $\det(I_{\alpha_1}) = \det(I_{\alpha_2}^r c)$ 
for some $c \in \allowbreak D$ and 
$1 \leq  r \leq  q - 1$. Also, there exists 
$x \in \mathrm{Sym}(q)$ 
such that $x I_{\alpha_2}^r  cx^{-1} = Ib$ for some
$b \in \mathrm{D}(q, \F)$. Hence, by Lemma~\ref{Lem31}
 once more, $H_{\alpha_1}$ and $H_{\alpha_2}$ are
 conjugate (this time by a monomial matrix).
\end{proof}

\begin{corollary}
\label{Cor34}
Define $\mathcal{H} = \{H_\alpha \sut 
\alpha\in \F^\times \setminus S\}$. 
The $\GL(q,\F)$-conjugacy classes of the
groups in $\mathcal H$ are in one-to-one 
correspondence with the distinct subgroups 
of $\F^\times /S$ of
order $q$. 
Therefore the number of such 
classes is finite if and only if 
$\F^\times /S$ is finite.
\end{corollary}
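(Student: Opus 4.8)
The plan is to read off the bijection directly from Lemma~\ref{Lem33}, once one records a single structural fact about the quotient $\F^\times/S$, and then to settle the finiteness clause by an elementary counting argument. First I would observe that since $S = \mathrm{Syl}_q(\F^\times)(\F^\times)^q$ contains $(\F^\times)^q$, every element of $\F^\times/S$ is killed by $q$; as $q$ is prime, $\F^\times/S$ is therefore an elementary abelian $q$-group, i.e.\ a vector space over the field of $q$ elements. Consequently each nontrivial element of $\F^\times/S$ generates a subgroup of order exactly $q$, every subgroup of order $q$ is cyclic, and such a subgroup is $\langle \alpha S\rangle$ for any coset generator $\alpha S$, necessarily with $\alpha \in \F^\times\setminus S$.

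Next I would set up the correspondence: to $H_\alpha\in\mathcal H$ (so $\alpha\notin S$) assign the order-$q$ subgroup $\langle\alpha S\rangle$ of $\F^\times/S$. By the second assertion of Lemma~\ref{Lem33}, for $\alpha_1,\alpha_2\notin S$ the groups $H_{\alpha_1}$ and $H_{\alpha_2}$ are $\GL(q,\F)$-conjugate if and only if $\langle\alpha_1 S\rangle=\langle\alpha_2 S\rangle$. This single equivalence does two jobs at once: the ``only if'' direction shows the assignment descends to a well-defined map on $\GL(q,\F)$-conjugacy classes of members of $\mathcal H$, and the ``if'' direction shows the induced map is injective. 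Surjectivity onto the set of all order-$q$ subgroups of $\F^\times/S$ is then immediate from the structural fact of the first paragraph: any such subgroup has the form $\langle\alpha S\rangle$ with $\alpha\notin S$, so it is the image of the class of $H_\alpha\in\mathcal H$. This yields the asserted one-to-one correspondence.

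Finally, for the last sentence it suffices to prove that $\F^\times/S$ is finite if and only if it has only finitely many subgroups of order $q$; combined with the bijection this gives the claim. One direction is trivial. For the converse I would argue contrapositively: if $\F^\times/S$ is infinite then, being an infinite-dimensional vector space over the $q$-element field, it contains a countably infinite linearly independent family $v_1,v_2,\dots$, and the subgroups $\langle v_i\rangle$ are pairwise distinct of order $q$, so there are infinitely many order-$q$ subgroups.

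There is no serious obstacle here: granted Lemma~\ref{Lem33}, the corollary is essentially bookkeeping. The only genuinely new input is the observation that $\F^\times/S$ has exponent $q$ — this is what makes ``subgroup of order $q$'' exactly the right parametrizing set and what powers the surjectivity step — together with the routine fact that an infinite elementary abelian $q$-group has infinitely many subgroups of order $q$. The one point worth stating carefully is that distinct cosets $\alpha_1 S\neq\alpha_2 S$ may nonetheless generate the same order-$q$ subgroup, so that the parametrization is genuinely by subgroups rather than by the elements $\alpha$; this is already built into the phrasing of Lemma~\ref{Lem33} and causes no trouble.
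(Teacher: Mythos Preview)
Your proposal is correct and matches the paper's approach: the paper states Corollary~\ref{Cor34} without proof, treating it as an immediate consequence of Lemma~\ref{Lem33}, and what you have written is exactly the natural unpacking of that inference. The only ingredients beyond Lemma~\ref{Lem33} are the observation that $\F^\times/S$ has exponent $q$ (since $(\F^\times)^q\leq S$) and the elementary fact that an elementary abelian $q$-group is finite if and only if it has finitely many subgroups of order $q$, both of which you supply correctly.
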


\begin{remark}
 If $\F$ is algebraically closed or 
finite then $\mathcal H=\emptyset$: a 
maximal absolutely irreducible monomial 
locally nilpotent subgroup of $\GL(q,\F)$ 
is conjugate to $H_1$.
\end{remark}

We turn next to primitive groups. 
\begin{lemma}\label{Lem36}
Let $H$ be a primitive locally nilpotent 
subgroup of $\GL(q,\F)$. Then $H$ has
an irreducible abelian normal subgroup.
\end{lemma}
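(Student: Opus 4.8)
The plan is to exploit the structure theory of primitive linear groups together with the fact that $q$ is prime. Since $H$ is primitive it is in particular irreducible, hence (as $q$ is prime) absolutely irreducible; and since $H$ is locally nilpotent, $H\F^\times 1_q/\F^\times 1_q$ lies in a Sylow $q$-subgroup of $\PGL(q,\F)$, so $q=\mathrm{char}\,\F$ is impossible (there would be no element of order $q$ in $\F^\times$ available to make absolutely irreducible locally nilpotent subgroups exist). Thus $q\neq\mathrm{char}\,\F$, which puts us in the setting of the structure theorem quoted just before Lemma~\ref{Lem36}: with $K=[H,H]$ and $H_0=\mathsf{C}_H(K)$, $K$ is an abelian $q$-group, $\Sigma=\langle K\rangle_\F$ is a field, $H/H_0\cong\mathrm{Gal}(\Sigma/\F)$, $[H_0,H_0]\leq\F^\times 1_q$, and $H_0$ is primitive absolutely irreducible of class at most $2$ on $\GL(m,\Sigma)$ with $m=q/|\Sigma:\F|$.

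The key step is a divisibility dichotomy coming from $q$ prime. Since $|\Sigma:\F|=q/m$ divides $q$, either $|\Sigma:\F|=q$ (so $m=1$) or $|\Sigma:\F|=1$ (so $\Sigma=\F$ and $m=q$). In the first case $H_0\leq\GL(1,\Sigma)=\Sigma^\times 1$ is already abelian and normal in $H$, and it is irreducible as a subgroup of $\GL(q,\F)$ because $\Sigma$ acts irreducibly on itself over $\F$ — so $H_0$ is the desired subgroup. In the second case $\Sigma=\F$ forces $K\leq\F^\times 1_q$, hence $H_0=\mathsf{C}_H(K)=H$, so $H$ itself is class-$2$ nilpotent and absolutely irreducible on $\GL(q,\F)$. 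Then I would argue that a class-$2$ nilpotent absolutely irreducible subgroup of $\GL(q,\F)$, $q$ prime, has an abelian normal subgroup acting irreducibly: concretely, $H/\mathsf{Z}(H)$ is an abelian group supporting a nondegenerate alternating form (commutator map into $\F^\times 1_q\cong$ a group of roots of unity), so it is a product of cyclic groups, and absolute irreducibility of degree $q$ forces $|H/\mathsf{Z}(H)|=q^2$ — a nonabelian group of order $q^3$ and exponent dividing $q\cdot|\mathsf{Z}(H)|$ acting irreducibly. Such a group has a maximal abelian subgroup $A$ of index $q$ in $H$, normal in $H$, and by Clifford theory (index $q$ prime, $H$ irreducible, $H/A$ cyclic) $A$ acts either irreducibly or as a sum of $q$ conjugate characters; the latter makes $H$ monomial, contradicting primitivity, so $A$ is irreducible abelian normal.

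The main obstacle is the class-$2$ case: pinning down that the only way for a primitive (not merely irreducible) class-$2$ nilpotent group to act absolutely irreducibly in prime degree $q$ is via the extraspecial-type configuration just described, and then cleanly extracting the irreducible abelian normal subgroup while ruling out the monomial alternative using primitivity. I would handle this by first reducing modulo the center, identifying $\bar H=H/(H\cap\F^\times 1_q)$ with a subgroup of $\PGL(q,\F)$ of exponent $q$ inside a Sylow $q$-subgroup, showing $|\bar H|=q^2$ from the degree count $\sum(\dim\chi)^2=|\bar H|$ together with absolute irreducibility of degree $q$, and then lifting a maximal abelian subgroup. A secondary, more routine point is verifying irreducibility (over $\F$, not just $\overline\F$) of the abelian normal subgroup produced in the $m=1$ case and in the class-$2$ case; this follows from the Galois/field-extension description of $\Sigma$ and, respectively, from the fact that an abelian subgroup of index $q$ in an absolutely irreducible group of prime degree is either irreducible or forces a monomial (imprimitive) structure.
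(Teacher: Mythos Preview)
Your route is quite different from the paper's, and considerably heavier. The paper's proof is only a few lines: a locally nilpotent linear group is solvable, hence has an abelian normal subgroup $A$ of finite index; if $A$ is central then $H/\mathsf{Z}(H)$ is finite, so $H$ is nilpotent, and a nonabelian nilpotent group always contains a noncentral abelian normal subgroup; finally, any noncentral abelian normal subgroup $B$ of the primitive group $H$ must be irreducible, since otherwise its isotypic components would form a nontrivial system of imprimitivity for $H$. No structure theorem is invoked at all.

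By contrast you route the argument through Konyukh's structure theorem and then attempt a detailed analysis of the class-$2$ case. This can be made to work, but there are gaps as written. First, ``irreducible in prime degree implies absolutely irreducible'' is false when $H$ is abelian (e.g.\ $\C^\times \hookrightarrow \GL(2,\R)$ is primitive but not absolutely irreducible), so you must dispose of the abelian case separately before the structure theorem applies; this is trivial ($H$ itself is the desired subgroup), but it must be said. Second, your claim $|\bar H| = q^2$ via $\sum(\dim\chi)^2 = |\bar H|$ presupposes that $\bar H$ is finite, which you have not established, and the ``exponent $q$'' assertion is unjustified: commutators lie in an abelian $q$-group, not necessarily in its $q$-torsion. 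In fact the degree count is unnecessary. Once $H$ is nonabelian of class~$2$, the subgroup $\langle \mathsf{Z}(H), h\rangle$ for any noncentral $h$ is abelian, normal (it contains $[H,H]$), and noncentral, and then your own Clifford-plus-primitivity endgame---which is precisely the paper's final step---already finishes the job. So the detour through the structure theorem and the extraspecial bookkeeping buys nothing here; the paper reaches the same Clifford step directly from first principles.
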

\begin{proof}
Since $H$ is a
locally nilpotent linear group, it is 
solvable. Thus $H$ has an abelian normal 
subgroup $A$ of finite 
 index~\cite[p. \hspace{-3pt}135, Theorem~6]{Supr}. 
If $A \leq \mathsf{Z}(H)$ then 
$H/\mathsf{Z}(H)$ is finite and so $H$
is nilpotent. Recall that a nonabelian 
nilpotent group 
contains a noncentral abelian 
normal subgroup.
Any such subgroup $B$ of $H$ 
must be irreducible. For if it 
were reducible then $B$
would be diagonalizable with 
inequivalent irreducible parts,
contradicting primitivity of $H$.
\end{proof}

By Lemma~\ref{Lem36}, a primitive 
locally nilpotent subgroup of $\GL(q,\F)$ 
is contained in the $\GL(q,\F)$-normalizer 
of the multiplicative group of a field 
extension $\Delta$ of $\F1_q$ of
degree $q$. Since this degree is prime, 
$\Delta$ is a cyclic extension of $\F$, with 
Galois group of order $q$.  
If $\F^\times$ has an element $\xi$
of order $q$, then  $\Delta = \langle h\rangle_\F$
for some $h \in \GL(q,\F)$ such that 
$h^q = \beta 1_q \in \F^\times 1_q$ 
(see \cite[p. \hspace{-3pt}289, Theorem~6.2]{Lang}).
Since  $\beta \in (\F^\times )^q$ implies 
that $h$ is scalar, $\beta = \alpha^r \gamma^q$
for some $\alpha, \gamma\in \F^\times$ and 
$1 \leq r \leq q - 1$. 
Then $\gamma^{-1}h$ and $I_{\alpha}^r$
have the same characteristic (minimal) polynomial
 $X^q -\alpha^r1_q$, which is 
 $\F$-irreducible; so $\gamma^{-1}h$ and 
$I_{\alpha}^r$ are conjugate. 
Hence $\Delta$ is conjugate to $\Delta_\alpha :=
 \langle I_\alpha\rangle_\F$, $\alpha \not \in (\F^\times)^q$.
We see that $\mathsf{N}_{\GL(q,\F)}(\Delta_\alpha^\times)
= \langle \Delta_\alpha^\times, d\rangle$ where
$d=\mathrm{diag}(1,\xi,\ldots, \xi^{q-1})$.
Denote by $G(\alpha, b)$
 the subgroup $\langle A_\alpha, db\rangle$
of $\langle \Delta_{\alpha}^\times, d\rangle$,
where $A_\alpha/\F^\times 1_q$
is the Sylow $q$-subgroup of 
$\Delta_{\alpha}^\times/\F^\times 1_q$
and $b\in \Delta_{\alpha}^\times$.
Since $A_\alpha$ is a noncentral
 irreducible subgroup, $G(\alpha, b)$ is
absolutely irreducible.
\begin{lemma}\label{Lem37}
An absolutely irreducible primitive locally 
nilpotent group $H\leq \GL(q,\F)$
is conjugate to a subgroup of some $G(\alpha, b)$.
\end{lemma}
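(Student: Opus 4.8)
The plan is to follow the same reduction that produced the monomial description in Lemma~\ref{Lem32}, now starting from the structural information supplied by Lemma~\ref{Lem36} and the discussion immediately following it. By Lemma~\ref{Lem36}, $H$ has an irreducible abelian normal subgroup $B$; since $H$ is locally nilpotent, every subgroup of $H$ (in particular $B$) is completely reducible, hence diagonalizable over $\overline{\F}$, and irreducibility of $B$ forces $\langle B\rangle_\F$ to be a field extension $\Delta$ of $\F1_q$ of degree $q$ inside $\mathrm{Mat}(q,\F)$. Absolute irreducibility of $H$, together with $B\unlhd H$, places $H$ in $\mathsf{N}_{\GL(q,\F)}(\Delta^\times)$: conjugation by $H$ permutes the eigenspaces of $\Delta$ over $\overline\F$, and since $\Delta$ acts as a field we get $H$ normalizing $\Delta^\times$. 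Existence of an element of order $q$ in $\F^\times$ is guaranteed here, because $H/\F^\times 1_q$ maps into a Sylow $q$-subgroup of $\mathrm{PGL}(q,\F)$ and $B$ is noncentral, so $\Delta/\F$ is a degree-$q$ (hence cyclic, by primality) Galois extension and $\mu_q\subseteq\F^\times$ follows from the theory of cyclic extensions as in the paragraph before the lemma.

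Next I would use the normalization $\Delta\mapsto\Delta_\alpha=\langle I_\alpha\rangle_\F$ already carried out in the text: up to $\GL(q,\F)$-conjugacy we may assume $\Delta=\Delta_\alpha$ for some $\alpha\notin(\F^\times)^q$, and then $\mathsf{N}_{\GL(q,\F)}(\Delta_\alpha^\times)=\langle\Delta_\alpha^\times,d\rangle$ with $d=\mathrm{diag}(1,\xi,\dots,\xi^{q-1})$, the element $d$ realizing a generator of $\mathrm{Gal}(\Delta_\alpha/\F)$. So after conjugation $H\leq\langle\Delta_\alpha^\times,d\rangle$. The quotient $\langle\Delta_\alpha^\times,d\rangle/\Delta_\alpha^\times$ is cyclic of order $q$, generated by the image of $d$. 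Now apply local nilpotence: $H\F^\times1_q/\F^\times1_q$ is a $q$-group, so the abelian part $H\cap\Delta_\alpha^\times$, being a subgroup of $\Delta_\alpha^\times$ whose image in $\Delta_\alpha^\times/\F^\times1_q$ is a $q$-group, lies in $A_\alpha$ (the preimage of the Sylow $q$-subgroup of $\Delta_\alpha^\times/\F^\times 1_q$). Since $H$ is nonabelian (it is primitive, hence not conjugate into a maximal torus) its image in $\langle\Delta_\alpha^\times,d\rangle/\Delta_\alpha^\times$ is all of that cyclic group of order $q$, so $H$ contains an element of the form $db$ with $b\in\Delta_\alpha^\times$. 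Therefore $H\leq\langle A_\alpha,db\rangle=G(\alpha,b)$, which is the claim.

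The step I expect to be the main obstacle is the passage $H\cap\Delta_\alpha^\times\leq A_\alpha$, i.e., controlling the abelian part of $H$ inside $\Delta_\alpha^\times$. One must argue that local nilpotence of $H$ forces its diagonalizable abelian normal part to sit inside the Sylow-$q$ preimage $A_\alpha$ rather than in a larger subgroup of $\Delta_\alpha^\times$; this uses that an irreducible locally nilpotent linear group is center-by-periodic with central quotient a product of $p$-groups (the remark after Theorem~\ref{Thm24}), applied to the degree-$q$ situation where only the prime $q$ can occur in the index $|H\F^\times1_q:\F^\times1_q|$ because $H/\F^\times1_q$ embeds in a Sylow $q$-subgroup of $\mathrm{PGL}(q,\F)$. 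A secondary routine point is checking that $H$ cannot be abelian: if it were, $\langle H\rangle_\F$ would be a commutative subalgebra of $\mathrm{Mat}(q,\F)$ acting irreducibly, hence a field, and $H$ would be conjugate into $\Delta_\alpha^\times$, contradicting primitivity (a primitive group is in particular not conjugate into the diagonal subgroup). Everything else is bookkeeping with the explicit generators $I_\alpha$ and $d$ that the text has already set up.
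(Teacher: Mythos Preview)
Your proposal is correct and follows the same route as the paper's (very terse) proof: conjugate $H$ into $\langle \Delta_\alpha^\times,d\rangle$ using Lemma~\ref{Lem36} and the normalization preceding the lemma, pick a coset representative $db$, and then argue $H\cap\Delta_\alpha^\times\leq A_\alpha$. The paper compresses this last step to a bare citation of Theorem~\ref{Thm24}, while you spell it out via the Sylow $q$-subgroup of $\PGL(q,\F)$; these are the same argument.

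One small correction: your justification that $H$ is nonabelian is not right. Being contained in $\Delta_\alpha^\times$ does not contradict primitivity---$\Delta_\alpha^\times$ is an \emph{irreducible} abelian subgroup of $\GL(q,\F)$ (it is not the diagonal subgroup over $\F$), and irreducible abelian groups in prime degree are certainly primitive. The correct reason $H$ is nonabelian is simply that it is \emph{absolutely} irreducible in degree $q\geq 2$: an abelian absolutely irreducible group lies in its own centralizer, which by Schur is $\F^\times 1_q$, forcing $q=1$. This is immediate from the hypotheses and makes the ``secondary routine point'' genuinely routine.
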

\begin{proof}
 Up to conjugacy, $H = \langle H \cap \Delta_{\alpha}^\times,
db\rangle$ for some $\alpha \in \F^\times 
\setminus (\F^\times)^q$ and $b\in \Delta_{\alpha}^\times$. 
Then $H \cap \Delta_{\alpha}^\times \leq A_\alpha$
 by Theorem~\ref{Thm24}.
\end{proof}
Let $\epsilon_k$ be an element of 
multiplicative order $2^k$ in $\overline{\F}$. 
We drop the subscript when $k=2$, 
i.e., $\epsilon$ is a square root of $-1$.
\begin{lemma}
\label{Lem38}
Suppose that $\mathrm{char} \hspace{3pt} \F\neq 2$ 
and $\epsilon \not \in \F$. Let $\E = \F(\epsilon)$, and
let $\sigma$ be the $\F$-involution of $\E$. 
If $\mathrm{Syl}_2 (\E^\times ) = \langle \epsilon_m\rangle$
is cyclic then $\mathrm{Syl}_2 (\E^\times /\F^\times )$ 
is cyclic. Explicitly, one of the following must be true$:$
\begin{itemize}
\item[{\rm (i)}] $\sigma(\epsilon_m) = -\epsilon_m^{-1}$ 
and  $\mathrm{Syl}_2 (\E^\times /\F^\times ) = 
\langle \epsilon_m\F^\times\rangle$
has order $2^{m-1};$
\item[{\rm (ii)}] $\sigma(\epsilon_m) = \epsilon_m^{-1}$ 
and  $\mathrm{Syl}_2 (\E^\times /\F^\times ) = 
\langle (1+\epsilon_m)\F^\times\rangle$
has order $2^{m}$.
\end{itemize}
\end{lemma}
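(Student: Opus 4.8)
The plan is to reduce everything to arithmetic inside the cyclic group $\langle\epsilon_m\rangle=\mathrm{Syl}_2(\E^\times)$, using the norm of $\E/\F$ to deal with the quotient by $\F^\times$. First I would pin down how $\sigma$ acts on $\mathrm{Syl}_2(\E^\times)$. As an automorphism, $\sigma$ permutes $\langle\epsilon_m\rangle$, so $\sigma(\epsilon_m)=\epsilon_m^t$ for some odd $t$; since $\sigma^2=1$ we get $t^2\equiv 1\pmod{2^m}$. The order-$4$ element $\epsilon\in\langle\epsilon_m\rangle$ is not fixed by $\sigma$ (because $\epsilon\notin\F$), and $\epsilon\mapsto\epsilon^t$ fixes it precisely when $t\equiv 1\pmod 4$; hence $t\equiv -1\pmod 4$, equivalently $\sigma(\epsilon)=-\epsilon$ is the other root of $X^2+1$. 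The solutions of $t^2\equiv 1\pmod{2^m}$ that are $\equiv -1\pmod 4$ are exactly $t\equiv -1\pmod{2^m}$ and, when $m\geq 3$, $t\equiv 2^{m-1}-1\pmod{2^m}$. Since $\epsilon_m^{2^{m-1}}=-1$ (the unique involution of a field of characteristic $\neq 2$), these are $\sigma(\epsilon_m)=\epsilon_m^{-1}$ (case (ii)) and $\sigma(\epsilon_m)=-\epsilon_m^{-1}$ (case (i)); for $m=2$ the second option degenerates to $t\equiv 1$ and is excluded, so only (ii) occurs then.

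The key structural step is to identify $\E^\times/\F^\times$ with the norm-one subgroup $\ker N$, where $N(x)=x\sigma(x)$. The map $\phi\colon\E^\times/\F^\times\to\ker N$ sending $x\F^\times\mapsto x\sigma(x)^{-1}$ is a well-defined homomorphism (as $\E^\times$ is abelian), it is injective since $x\sigma(x)^{-1}=1$ iff $x\in\F$, and it is onto $\ker N$ by Hilbert's Theorem 90 for the cyclic extension $\E/\F$. Since $\ker N\leq\E^\times$, every element of $\ker N$ of $2$-power order lies in $\mathrm{Syl}_2(\E^\times)=\langle\epsilon_m\rangle$, so $\mathrm{Syl}_2(\ker N)=\langle\epsilon_m\rangle\cap\ker N=\{\epsilon_m^k:\epsilon_m^{(1+t)k}=1\}$, a subgroup of a cyclic group and hence cyclic. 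As $\phi$ is an isomorphism it carries $\mathrm{Syl}_2(\E^\times/\F^\times)$ onto $\mathrm{Syl}_2(\ker N)$; this proves the first assertion.

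It then remains to compute orders and exhibit generators, via $\phi(x\F^\times)=x\sigma(x)^{-1}$. In case (i), $t=2^{m-1}-1$ gives $1+t=2^{m-1}$, so $\mathrm{Syl}_2(\ker N)=\langle\epsilon_m^2\rangle$ has order $2^{m-1}$; moreover $\phi(\epsilon_m\F^\times)=\epsilon_m^{1-t}=-\epsilon_m^2$, an odd power of $\epsilon_m^2$ and hence a generator of $\langle\epsilon_m^2\rangle$. Therefore $\langle\epsilon_m\F^\times\rangle=\mathrm{Syl}_2(\E^\times/\F^\times)$, of order $2^{m-1}$. In case (ii), $t=-1$ gives $1+t=0$, so $\mathrm{Syl}_2(\ker N)=\langle\epsilon_m\rangle$ has order $2^m$; here $1+\epsilon_m\neq 0$ (as $\epsilon_m\neq -1$) and $\sigma(1+\epsilon_m)=1+\epsilon_m^{-1}=\epsilon_m^{-1}(1+\epsilon_m)$, so $\phi((1+\epsilon_m)\F^\times)=\epsilon_m$ generates $\langle\epsilon_m\rangle$. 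Therefore $\langle(1+\epsilon_m)\F^\times\rangle=\mathrm{Syl}_2(\E^\times/\F^\times)$, of order $2^m$.

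The only substantive obstacle is the first paragraph: extracting $t\equiv -1\pmod 4$ (forced by $\sigma$ fixing $-1\in\F$ while moving $\epsilon$) and combining it with $t^2\equiv 1\pmod{2^m}$ to get the clean dichotomy, while not overlooking the degenerate case $m=2$. Once Hilbert's Theorem 90 is invoked the rest is bookkeeping, the only points to watch being that $-\epsilon_m^2$ generates $\langle\epsilon_m^2\rangle$ and that $1+\epsilon_m$ is a unit transforming as displayed under $\sigma$.
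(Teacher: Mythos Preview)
Your proof is correct and takes a genuinely different route from the paper's. The paper argues directly: it picks an arbitrary $x\F^\times$ of $2$-power order in $\E^\times/\F^\times$, writes $\sigma(x)=yx$ with $y\in\langle\epsilon_m\rangle$, and uses the trace identity $\mathrm{tr}(x)=(1+y)x\in\F$ together with the relation $1+\epsilon_{k-1}\in\langle\epsilon_k\rangle\F^\times$ to force $x\F^\times$ into the claimed cyclic subgroup. Your approach instead invokes Hilbert's Theorem~90 to identify $\E^\times/\F^\times$ with the norm-one subgroup $\ker N\leq\E^\times$, after which $\mathrm{Syl}_2(\ker N)=\langle\epsilon_m\rangle\cap\ker N$ is visibly cyclic and its order and generator drop out of the formula $N(\epsilon_m^k)=\epsilon_m^{(1+t)k}$. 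This is cleaner and more conceptual: once $t$ is determined, everything is arithmetic in $\Z/2^m\Z$, and the explicit generators are read off from $\phi(x\F^\times)=x\sigma(x)^{-1}$ rather than assembled by case analysis. The paper's argument, on the other hand, is more self-contained (no appeal to Hilbert~90) and stays closer to elementary manipulations inside $\E$. You also supply a justification for the dichotomy $\sigma(\epsilon_m)\in\{\epsilon_m^{-1},-\epsilon_m^{-1}\}$, which the paper simply asserts.
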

\begin{proof}
We make some preliminary comments. Either 
$\sigma(\epsilon_m) = \epsilon_m^{-1}$ or 
$\sigma(\epsilon_m) = -\epsilon_m^{-1}$.
Suppose that $\sigma(\epsilon_k) = \epsilon_k^{-1}$.
Then
\[
\sigma((1+\epsilon_k)^{2^k}) = 
\sigma(1+\epsilon_k)^{2^k} =
(1+\epsilon_{k}^{-1})^{2^k} = 
\left(\frac{1+\epsilon_k}{\epsilon_k}\right)^{2^k}
= (1+\epsilon_k)^{2^k}.  
\]
Thus $(1+\epsilon_k)^{2^k} \in \F$ and 
\begin{equation}\label{Lem38Eq1}
(1 + \epsilon_k )\F^\times \in  
\mathrm{Syl}_2 (\E^\times /\F^\times )
\end{equation}
if $k\geq 2$. Also, if $k > 2$ then 
$\epsilon_{k}^{-1}(\epsilon_{k-1}+1) =
\epsilon_{k}^{-1}(\epsilon_{k}^2+1) = \epsilon_{k}
+\epsilon_{k}^{-1}\in \F^\times$,
 and hence
\begin{equation}\label{Lem38Eq2}
1 + \epsilon_{k-1} \in \langle \epsilon_k\rangle\F^\times .
\end{equation}

Let $x\F^\times$ be a nontrivial element of 
$\mathrm{Syl}_2 (\E^\times /\F^\times )$ 
of order $2^l$; so
$x^{2^l} \in \allowbreak \F^\times \setminus (\F^\times )^2$.

Suppose that $l = 1$. Write $x = a + \epsilon b$
for $a, b \in \F$. 
Then $2ab \epsilon \in \F$ implies that $a = \allowbreak 0$, 
i.e.,
\begin{equation}\label{Lem38Eq3}
x \in \langle \epsilon \F^\times \rangle \leq 
\langle \epsilon_m \F^\times \rangle .
\end{equation}

Suppose next that $l \geq 2$. We have 
$\sigma(x) = yx$ for some $y \in \mathrm{Syl}_2 (\E^\times )$,
$y^{2^l}=\allowbreak 1$. Now $y \neq -1$,
because if $y=-1$ then $|x\F^\times | = 2 < 2^l$. 
Further, $x = \sigma^2 (x) = \sigma(yx) = \allowbreak
 \sigma(y)yx$ and so
\begin{equation}\label{Lem38Eq4}
\sigma(y) = y^{-1} .
\end{equation}
Since $\mathrm{tr}(x) = (1 + y)x \in \F$,
\begin{equation}\label{Lem38Eq5}
x \in (1+y)^{-1}\F^\times =  
(1+\sigma(y))\F^\times = (1+y^{-1})\F^\times.
\end{equation}

We are ready to complete the proof that 
 (i) or (ii) must be true. Let
$\sigma(\epsilon_m) = -\epsilon_m^{-1}$, so 
 $m > 2$. By \eqref{Lem38Eq3}, we may take 
$l \geq 2$, in which event $4 < |y| \leq  2^{m-1}$
by \eqref{Lem38Eq4}.
Then $(1 + y^{-1} )\F^\times 
\subseteq \langle \epsilon_m\rangle \F^\times$ 
by \eqref{Lem38Eq2}, and by \eqref{Lem38Eq5}, 
this yields (i).

Let $\sigma(\epsilon_m ) = \epsilon_m^{-1}$.
By \eqref{Lem38Eq1},
$\langle (1+\epsilon_m)\F^\times\rangle
\subseteq \mathrm{Syl}_2 (\E^\times /\F^\times )$.
As $\sigma$ fixes $\epsilon_m^{-1} (1 + \epsilon_m )^2$,
additionally $\epsilon_m\F^\times \in \allowbreak 
\langle (1+\epsilon_m)\F^\times\rangle^2$.
 Part (ii) follows from \eqref{Lem38Eq2}, 
\eqref{Lem38Eq3}, and \eqref{Lem38Eq5}.
\end{proof}

\begin{corollary}
\label{Cor39}
If $\mathrm{Syl}_2 (\E^\times )$ is quasicyclic 
then $\mathrm{Syl}_2 (\E^\times /\F^\times )$ is
quasicyclic, and equal to
$\{\langle \epsilon_k \F^\times \rangle 
\sut \epsilon_k \allowbreak \in \mathrm{Syl}_2 (\E^\times )\}$.
\end{corollary}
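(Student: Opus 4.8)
The plan is to replace the quotient $\E^\times/\F^\times$ by a subgroup of $\E^\times$ inside which the $2$-part is transparent. Write $N\colon\E^\times\to\F^\times$ for the norm $N(x)=x\sigma(x)$ and put $\E^1=\ker N$. The first — and I think the crucial — step is to show that $\sigma$ inverts $\mathrm{Syl}_2(\E^\times)$. Indeed $\sigma$ stabilises $\mathrm{Syl}_2(\E^\times)$ (it preserves orders), and acts on it as an automorphism of order exactly $2$, since $\epsilon\in\mathrm{Syl}_2(\E^\times)\setminus\F$; as $\mathrm{Aut}(\Z(2^\infty))\cong\Z_2^\times$ has inversion as its only element of order $2$, we get $\sigma(\zeta)=\zeta^{-1}$ for every $\zeta\in\mathrm{Syl}_2(\E^\times)$, and in particular $\sigma(\epsilon_k)=\epsilon_k^{-1}$ for all $k$ (so that only alternative~(ii) of Lemma~\ref{Lem38} survives, ``uniformly in $k$''). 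Hence $N(\zeta)=\zeta\sigma(\zeta)=1$, i.e.\ $\mathrm{Syl}_2(\E^\times)\le\E^1$.

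Next I would apply Hilbert's Theorem~90 to the cyclic degree-$2$ extension $\E/\F$: the map $\delta\colon\E^\times\to\E^1$, $\delta(x)=x\sigma(x)^{-1}$, is surjective with kernel $\F^\times$, so it induces a group isomorphism $\bar\delta\colon\E^\times/\F^\times\to\E^1$. Any element of $\E^1$ of $2$-power order is a $2$-power-torsion element of $\E^\times$, hence lies in $\mathrm{Syl}_2(\E^\times)$; together with the inclusion $\mathrm{Syl}_2(\E^\times)\le\E^1$ from the previous step this gives $\mathrm{Syl}_2(\E^1)=\mathrm{Syl}_2(\E^\times)$, which is quasicyclic by hypothesis. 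Since $\bar\delta$ is an isomorphism it carries $\mathrm{Syl}_2(\E^\times/\F^\times)$ onto $\mathrm{Syl}_2(\E^1)=\mathrm{Syl}_2(\E^\times)$, which yields the first assertion.

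For the explicit description, observe that $\bar\delta(\zeta\F^\times)=\zeta\sigma(\zeta)^{-1}=\zeta^{2}$ for $\zeta\in\mathrm{Syl}_2(\E^\times)$, so $\bar\delta$ sends the subgroup $\mathrm{Syl}_2(\E^\times)\F^\times/\F^\times$ onto $\{\zeta^{2}\sut\zeta\in\mathrm{Syl}_2(\E^\times)\}=\mathrm{Syl}_2(\E^\times)$, squaring being onto on a quasicyclic $2$-group. As this equals $\bar\delta(\mathrm{Syl}_2(\E^\times/\F^\times))$ and $\bar\delta$ is injective, $\mathrm{Syl}_2(\E^\times/\F^\times)=\mathrm{Syl}_2(\E^\times)\F^\times/\F^\times=\bigcup_k\langle\epsilon_k\F^\times\rangle$ (using $\mathrm{Syl}_2(\E^\times)=\bigcup_k\langle\epsilon_k\rangle$ and $\langle\epsilon_k\F^\times\rangle=\langle\epsilon_k\rangle\F^\times/\F^\times$); since $\langle\epsilon_k\F^\times\rangle$ runs over precisely the finite subgroups of this quasicyclic group as $k$ varies, the stated equality follows.

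The main obstacle is really just that opening step — ruling out the ``twisted'' involution $\sigma(\epsilon_k)=-\epsilon_k^{-1}$ that genuinely occurs when $\mathrm{Syl}_2(\E^\times)$ is finite cyclic (alternative~(i) of Lemma~\ref{Lem38}). Besides the $\mathrm{Aut}(\Z(2^\infty))$ argument above, one can do this by hand: exactly as in the preamble to the proof of Lemma~\ref{Lem38}, $\sigma(\epsilon_k)\in\{\epsilon_k^{-1},-\epsilon_k^{-1}\}$, and then $2$-divisibility of $\mathrm{Syl}_2(\E^\times)$ lets us pick $\epsilon_{k+1}$ with $\epsilon_{k+1}^2=\epsilon_k$, whence $\sigma(\epsilon_k)=\sigma(\epsilon_{k+1})^2=\epsilon_{k+1}^{-2}=\epsilon_k^{-1}$, the sign disappearing on squaring. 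Everything downstream is formal; alternatively one could bypass Hilbert~90 and simply rerun the bookkeeping of \eqref{Lem38Eq1}--\eqref{Lem38Eq5}, which now goes through since an auxiliary root $\epsilon_{k+1}$ is always available, but the norm route is shorter.
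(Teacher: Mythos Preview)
Your proof is correct and, after sharing the paper's opening move, takes a genuinely different route. Both arguments begin by showing that $\sigma$ inverts every element of $\mathrm{Syl}_2(\E^\times)$; the paper states this directly, and your two justifications (via $\mathrm{Aut}(\Z(2^\infty))\cong\Z_2^\times$, or by squaring $\sigma(\epsilon_{k+1})=\pm\epsilon_{k+1}^{-1}$) are both valid. From there the paper simply invokes Lemma~\ref{Lem38}\,(ii) together with \eqref{Lem38Eq2}, in effect rerunning the element-by-element analysis \eqref{Lem38Eq1}--\eqref{Lem38Eq5} in the quasicyclic setting to place every $x\F^\times\in\mathrm{Syl}_2(\E^\times/\F^\times)$ inside some $\langle\epsilon_k\F^\times\rangle$. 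You instead pass through Hilbert~90: the isomorphism $\bar\delta\colon\E^\times/\F^\times\to\E^1$ lets you identify $\mathrm{Syl}_2(\E^\times/\F^\times)$ with $\mathrm{Syl}_2(\E^1)$, and the inversion property gives $\mathrm{Syl}_2(\E^1)=\mathrm{Syl}_2(\E^\times)$ at once. The paper's approach is more elementary and stays entirely within the computations already set up in Lemma~\ref{Lem38}; yours is more structural, avoids case analysis, and makes transparent \emph{why} the answer is exactly $\mathrm{Syl}_2(\E^\times)\F^\times/\F^\times$ (squaring is surjective on a Pr\"ufer group, so $\bar\delta$ already hits everything from that subgroup). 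Your closing remark that one could alternatively rerun \eqref{Lem38Eq1}--\eqref{Lem38Eq5} is precisely what the paper does.
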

\begin{proof}
Since $\sigma(\epsilon_k) =\epsilon_{k}^{-1}$ 
for each $\epsilon_k \in \mathrm{Syl}_2 (\E^\times )$,  
we cannot have $\sigma(\epsilon_k) =-\epsilon_{k}^{-1}$. 
The corollary is then a consequence of 
Lemma~\ref{Lem38}~(ii) and \eqref{Lem38Eq2}. 
\end{proof}

\begin{lemma}\label{Lem310}
Let $|\E : \F| = q$ and $\E = \F(a)$, 
where $a^{q^l}\in\F$. 
Suppose that $\F^\times$ has an element 
of order $q$, and $\E \neq  \F(\epsilon)$ 
if $q = 2$. 
Then $\mathrm{Syl}_q (\E^\times /\F^\times ) 
= \langle a\F^\times \rangle$.
\end{lemma}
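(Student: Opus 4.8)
The plan is to reduce to Kummer theory and then pin down the $q$-primary component $P$ of $\E^\times/\F^\times$ in two stages: first determine the elements of $P$ of order dividing $q$, then rule out elements of order $q^2$. A routine reduction --- replacing $a$ by a suitable $q$-power of itself, using that $[\E:\F]=q$ is prime --- lets us assume $a^q=c\in\F^\times$ with $a\notin\F$ (that this does not change $\langle a\F^\times\rangle$ follows \emph{a posteriori}, once $P$ is seen to be cyclic of order $q$). The hypothesis that $\F^\times$ has an element $\xi$ of order $q$ forces $\mathrm{char}\,\F\neq q$ and $\mu_q(\E)=\langle\xi\rangle\subseteq\F$; it also forces $c\notin(\F^\times)^q$, for otherwise $a$ would be a scalar times a $q$-th root of unity and hence lie in $\F$. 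By the classical irreducibility criterion for binomials, $X^q-c$ is then $\F$-irreducible, so $\E=\F(a)$ is a cyclic Galois extension of degree $q$; fix a generator $\tau$ of $\mathrm{Gal}(\E/\F)$ and set $\zeta=\tau(a)/a$, a primitive $q$-th root of unity lying in $\F$.

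First I would show that the elements of $\E^\times/\F^\times$ of order dividing $q$ form exactly the subgroup $\langle a\F^\times\rangle$, which thus has order $q$. One inclusion is immediate from $a^q=c\in\F^\times$ and $a\notin\F$. For the reverse, let $x\in\E^\times$ with $x^q\in\F^\times$; then $\tau(x)^q=x^q$, so $\tau(x)/x\in\mu_q(\E)\subseteq\F$, say $\tau(x)/x=\xi^r$. Choosing $m$ with $\zeta^m=\xi^r$ (possible since $\zeta$ has order $q$) gives $\tau(x/a^m)=x/a^m$, hence $x/a^m\in\F^\times$ and $x\F^\times\in\langle a\F^\times\rangle$.

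Next I would rule out elements of order $q^2$. Suppose $x\F^\times$ has order $q^2$. Then $x^q\F^\times$ has order $q$, so by the previous step it generates $\langle a\F^\times\rangle$; after replacing $x$ by a suitable power (which preserves its order) we may assume $x^q=a\gamma$ for some $\gamma\in\F^\times$. Then $x^{q^2}=c\gamma^q\in\F^\times$, while $\F(x)=\E$ since $x^q=a\gamma\notin\F$; hence the minimal polynomial of $x$ over $\F$ has degree $q<q^2$, so $X^{q^2}-c\gamma^q$ is reducible over $\F$. By the binomial irreducibility criterion this forces either $c\gamma^q\in(\F^\times)^q$ --- whence $c\in(\F^\times)^q$, contradicting the reduction above --- or $q=2$ and $c\gamma^2\in-4(\F^\times)^4$; in the latter case $c=-(2e^2\gamma^{-1})^2$ for some $e\in\F^\times$, so $(a/(2e^2\gamma^{-1}))^2=-1$ and $\E=\F(a)=\F(\epsilon)$, against the hypothesis. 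Thus $P$ has exponent $q$, and combined with the first step this yields $P=\langle a\F^\times\rangle$.

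The main obstacle is this last step: converting ``$X^{q^2}-c\gamma^q$ is reducible over $\F$'' into a workable dichotomy requires the full irreducibility criterion for binomials $X^n-c$, including its exceptional clause when $4\mid n$ --- and it is precisely that clause which both produces and is neutralised by the excluded case $\E=\F(\epsilon)$ when $q=2$. A more hands-on alternative avoids the criterion: from $x^q=a\gamma$ one computes $(\tau(x)/x)^q=\zeta$, so $\eta:=\tau(x)/x\in\E$ is a primitive $q^2$-th root of unity; then $[\F(\eta):\F]\in\{1,q\}$, and analysing the action of $\tau$ on $x$ through $\tau^q=\mathrm{id}$ (for odd $q$ using that the order-$q$ subgroup of $(\Z/q^2\Z)^\times$ comprises the residues $1+qt$) produces a contradiction unless $q=2$ and $\F(\eta)=\F(\epsilon)=\E$. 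I would take the binomial-criterion version as the primary argument, being the shorter.
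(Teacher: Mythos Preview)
Your argument is correct. The paper's own proof is briefer and less uniform: for $q>2$, or for $q=2$ with $\epsilon\in\F$, it simply cites \cite[Lemma~2]{Konyukh8}, and only treats the residual case $q=2$, $\epsilon\notin\F$ directly. There it takes $x\F^\times$ of arbitrary $2$-power order $2^m$ with $x^{2^m}=\alpha\in\F^\times\setminus(\F^\times)^2$, and uses the irreducibility criterion for $X^4-\alpha$ to force $m=1$ (the exceptional clause $\alpha\in-4(\F^\times)^4$ being eliminated, exactly as in your argument, by producing a square root of $-1$ in $\E$); the case $m=1$ is then finished by writing $x=x_1+x_2\sqrt{\beta}$ and comparing coefficients. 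Your route is genuinely different in that it is self-contained for every $q$: the Kummer-theoretic identification of the order-$q$ layer replaces both the citation and the coefficient computation, and you then only need the binomial criterion for $X^{q^2}-c\gamma^q$ to bound the exponent. What you gain is a uniform proof that makes transparent why the hypothesis $\E\neq\F(\epsilon)$ is needed precisely when $q=2$; what the paper gains is brevity, at the cost of an external reference.
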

\begin{proof}
If $q > 2$, or $q = 2$ and $\epsilon\in \F$, 
then the result follows from 
\cite[Lemma~2]{Konyukh8}.

Let $q = 2$. Select $x\F^\times\in 
\E^\times/\F^\times$ of order $2^m \geq 2$; 
say $x^{2^m}=\alpha\in 
\allowbreak \F^\times \setminus (\F^\times )^2$. 
If $\alpha = -4\gamma^4$ for $\gamma\in \F^\times$, 
then $\gamma^{2^{m-1}}/2\gamma^2$
is a square root of $-1$, contradicting
$\E \neq  \F(\epsilon)$.

Suppose that $\alpha \not \in -4(\F^\times )^4$. 
The polynomial $X^4-\alpha$ is $\F$-irreducible,
so that if $m\geq \allowbreak 2$ then $|\E : \F | \geq 4$. 
Hence $m = 1$, $a = \sqrt{\beta}$ 
for some $\beta \in \F^\times$, and $x =\sqrt{\alpha}$. 
Now $\sqrt{\alpha} = \allowbreak x_1+x_2\sqrt{\beta}$
for some $x_1, x_2\in \F$. Then 
$\alpha = x_1^2 +\beta x_2^2 +2x_1x_2\sqrt{\beta}$
implies that $x_1 = 0$ or $x_2 = 0$. As the 
latter is impossible, 
 $x \in \langle a\F^\times 1_2\rangle$.
\end{proof}

\begin{lemma}\label{Lem311}
If $q > 2$ or $\alpha \not \in -(\F^\times)^2$
then $A_\alpha$ is the monomial group 
$\langle I_\alpha, \F^\times 1_q\rangle$.
Otherwise, $A_\alpha$ is primitive.
\end{lemma}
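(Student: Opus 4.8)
The plan is to split along the stated dichotomy, deducing the monomial conclusion from Lemma~\ref{Lem310} and establishing primitivity in the exceptional case by a short self-contained argument. Suppose first that $q > 2$, or that $q = 2$ and $\alpha \notin -(\F^\times)^2$. I would apply Lemma~\ref{Lem310} with $\E = \Delta_\alpha = \F[I_\alpha]$ and $a = I_\alpha$. Since $\alpha \notin (\F^\times)^q$ the polynomial $X^q - \alpha$ is $\F$-irreducible, so $|\Delta_\alpha : \F 1_q| = q$; moreover $a^q = \alpha 1_q \in \F^\times 1_q$, and $\F^\times$ contains an element of order $q$ by the standing hypothesis. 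The one point to verify is the auxiliary condition of Lemma~\ref{Lem310} that $\Delta_\alpha \neq \F(\epsilon)$ when $q = 2$: if $\epsilon \in \F$ this is clear since then $\F(\epsilon) = \F$ has degree $1$, whereas if $\epsilon \notin \F$ the quadratic extensions $\F(\sqrt{\alpha})$ and $\F(\sqrt{-1})$ coincide precisely when $\alpha \in -(\F^\times)^2$, which is excluded. Lemma~\ref{Lem310} then gives $\mathrm{Syl}_q(\Delta_\alpha^\times/\F^\times 1_q) = \langle I_\alpha\F^\times 1_q\rangle$, i.e.\ $A_\alpha = \langle I_\alpha, \F^\times 1_q\rangle$; and since $I_\alpha$ (like every scalar) is a monomial matrix, $A_\alpha$ is monomial.

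Now suppose $q = 2$ and $\alpha \in -(\F^\times)^2$, say $\alpha = -\gamma^2$ with $\gamma \in \F^\times$; as $\alpha \notin (\F^\times)^2$ this forces $-1 \notin (\F^\times)^2$, in particular $\mathrm{char}\,\F \neq 2$. Put $j = \gamma^{-1} I_\alpha$, so that $j^2 = -1_2$ and $\Delta_\alpha = \F 1_2 \oplus \F j$. The crux is a short computation: $u := 1_2 + j$ is a unit of $\Delta_\alpha$ (with inverse $\frac{1}{2}(1_2 - j)$), and $u^2 = 2j$ while $u^4 = -4\cdot 1_2$. Hence in $\Delta_\alpha^\times/\F^\times 1_2$ the coset $u\F^\times 1_2$ has order exactly $4$: its square is $\overline{2j} = \bar{j} \neq \bar{1}$ because $j \notin \F 1_2$, whereas its fourth power is trivial. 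Being of $2$-power order, this coset lies in $\mathrm{Syl}_2(\Delta_\alpha^\times/\F^\times 1_2) = A_\alpha/\F^\times 1_2$, and therefore $|A_\alpha/\F^\times 1_2| \geq 4$.

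It then remains to show $A_\alpha$ is primitive, which I would do by contradiction. The group $A_\alpha$ is irreducible (as noted just before Lemma~\ref{Lem37}), so if it failed to be primitive it would be conjugate to an irreducible monomial group $B \leq \GL(2,\F)$; then $\pi(B) \neq 1$ (otherwise $B \leq \mathrm{D}(2,\F)$ would be reducible), so $\pi(B) = \Sym(2)$, whence $B_0 := B \cap \mathrm{D}(2,\F)$ has index $2$ in $B$ and $B$ contains an anti-diagonal matrix $m$. Because $B$ is abelian — being conjugate to a subgroup of the commutative ring $\Delta_\alpha$ — the matrix $m$ centralizes $B_0$, and comparing $m\,\mathrm{diag}(s,t)$ with $\mathrm{diag}(s,t)\,m$ forces $s = t$; thus $B_0 \leq \F^\times 1_2$ and $|B/\F^\times 1_2| \leq 2$. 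Since conjugation fixes scalars, $B/\F^\times 1_2 \cong A_\alpha/\F^\times 1_2$, contradicting $|A_\alpha/\F^\times 1_2| \geq 4$. I expect no genuine obstacle here; the only points calling for a little care are verifying the hypothesis of Lemma~\ref{Lem310} in the $q = 2$ subcase and carrying out the small order-$4$ computation that drives the primitive case.
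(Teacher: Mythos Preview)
Your proof is correct and follows essentially the same route as the paper: both invoke Lemma~\ref{Lem310} with $\E=\Delta_\alpha$, $a=I_\alpha$ for the non-exceptional case, and in case $(*)$ both exhibit the element $1_2+\gamma^{-1}I_{-\gamma^2}$ of projective order~$4$ to rule out monomiality (the paper compresses your monomial argument to the remark that an abelian monomial subgroup of $\GL(2,\F)$ has every square scalar). Your version simply spells out the verifications more fully.
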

\begin{proof}
In Lemma~\ref{Lem310}, put $\E = \Delta_\alpha$
and $a = I_\alpha$.  Then 
$A_\alpha = \langle I_\alpha, \F^\times 1_q \rangle$
 unless $q = 2$ and $\Delta_\alpha \cong \F(\epsilon)$,
 i.e., $\alpha \in -(\F^\times )^2$. 
If $A_{-\gamma^2}\leq \GL(2,\F)$ were monomial 
then its square would be scalar; however,
$1_2 +\gamma^{-1} I _{-\gamma^2}
\in A_{-\gamma^2}$ has projective order $4$.
\end{proof}

We refer to the set of hypotheses $q = 2$ and 
$\alpha \in -(\F^\times)^2$ as case $(*)$. 
Lemma~\ref{Lem38} and Corollary~\ref{Cor39}
 describe the $A_\alpha$ in case $(*)$. 
Actually, a group $G(-\gamma^2 , b)$ in this 
case is conjugate to some $G(-1, b' )$, 
since $I_{-\gamma^2}$ is 
$\mathrm{D}(q, \F)$-conjugate to
$\gamma I_{-1}$ by Lemma~\ref{Lem31}. 
In all but case $(*)$, 
$|G(\alpha, b)/\F^\times 1_q | = q^2$
 (because $(db)^q = \det(b)1_q$
and $[I_\alpha , d]$ is scalar), so $G(\alpha, b)$ 
is class $2$ nilpotent. The locally nilpotent
group $G(-1, b)$ is nilpotent only if 
$\mathrm{Syl}_2 (\Delta_{-1}^\times)$ is cyclic, 
when $G(-1, b)/\F^\times 1_q$ is a dihedral
$2$-group and $G(-1, b)$ has nilpotency class 
$\log_2 |\mathrm{Syl}_2 (\Delta_{-1}^\times)\F^\times 1_2|$.

\begin{lemma}
\label{Lem312}
In case $(*)$, $G(\alpha, b)$ is primitive. 
In all other cases, $G(\alpha, b)$ is primitive
if and only if $\det(b) \not \in 
\langle (-1)^{q-1}\alpha, (\F^\times )^q\rangle 
= \Ddet(A_\alpha)$.
\end{lemma}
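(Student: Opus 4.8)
The plan is to settle case $(*)$ by an abstract overgroup argument, and in every other case to translate primitivity of $G:=G(\alpha,b)$ into a statement about the subgroups of $G$ lying between $\F^\times 1_q$ and $G$. In case $(*)$, Lemma~\ref{Lem311} says that $A_\alpha$ is primitive, and $A_\alpha$ is an irreducible subgroup of $G$. Any overgroup of an irreducible primitive linear group is itself primitive: if $G$ permuted the summands of a nontrivial decomposition $V=V_1\oplus\cdots\oplus V_q$, then so would $A_\alpha$, and $A_\alpha$ would either fix every $V_i$ (impossible, as $A_\alpha$ is irreducible) or permute them nontrivially (impossible, as $A_\alpha$ is primitive). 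Hence $G$ is primitive.

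Assume from now on that we are not in case $(*)$. By Lemma~\ref{Lem311}, $A_\alpha=\langle I_\alpha,\F^\times 1_q\rangle$; expanding the determinant of the monomial matrix $I_\alpha$ gives $\det(I_\alpha)=(-1)^{q-1}\alpha$, so $\Ddet(A_\alpha)=\langle(-1)^{q-1}\alpha,(\F^\times)^q\rangle$, as asserted. Set $\overline G=G/\F^\times 1_q$. From $I_\alpha^q=\alpha 1_q$, $(db)^q=\det(b)1_q$, the fact that $[I_\alpha,d]$ is scalar, and the known value $|\overline G|=q^2$, we get that $\overline G$ is elementary abelian of order $q^2$ with basis $\overline{I_\alpha},\overline{db}$. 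Consequently $G$ has exactly $q+1$ subgroups $N$ with $\F^\times 1_q\le N\le G$: the group $A_\alpha$, and the groups $N_k:=\langle\F^\times 1_q,\,I_\alpha^k\,db\rangle$ for $0\le k\le q-1$; each $N_k$ is abelian, being generated by scalars together with a single matrix. Since $q$ is prime and $G$ is absolutely irreducible, $G$ is primitive precisely when it is not monomial.

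The crux of the proof is the equivalence: \emph{$G$ is $\GL(q,\F)$-conjugate into the group of monomial matrices if and only if one of the $q+1$ subgroups above is diagonalizable over $\F$}. For the forward direction, a monomial realization of $G$ makes $\pi(G)\le\Sym(q)$ transitive (by irreducibility), and $|\pi(G)|$ divides $q^2=|\overline G|$; since $q^2\nmid q!$, this forces $|\pi(G)|=q$. Then $G\cap\mathrm{D}(q,\F)$ has index $q$ in $G$, contains $\F^\times 1_q$, and maps onto a subgroup of order $q$ in $\overline G$ — that is, it is one of the $q+1$ subgroups, realized here in diagonal form. For the converse, suppose one such $N$ is conjugated into $\mathrm{D}(q,\F)$. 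Then $N\unlhd G$ (as $\overline G$ is abelian), $G$ permutes the eigenspaces of $N$ transitively (otherwise a partial sum is a proper nonzero submodule), and because $q$ is prime and $N$ acts non-scalarly there must be exactly $q$ distinct one-dimensional eigenspaces; a basis compatible with these is permuted by $G$.

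It remains to test diagonalizability. A subgroup $\langle\F^\times 1_q,\,m\rangle$ with $m\notin\F^\times 1_q$ and $m^q=\beta 1_q$ is diagonalizable over $\F$ iff $\beta\in(\F^\times)^q$: otherwise $X^q-\beta$ is $\F$-irreducible (as $q$ is prime) and is the minimal polynomial of $m$, which then does not split. For $A_\alpha$ we have $\beta=\alpha\notin(\F^\times)^q$, so it never qualifies. For $N_k$, the identity $(xy)^q=x^q y^q[x,y]^{-\binom{q}{2}}$ (valid when $[x,y]$ is central), together with $[I_\alpha,db]=\xi 1_q$ and the vanishing of $\binom{q}{2}$ modulo $q$ for odd $q$, yields $(I_\alpha^k\,db)^q=\big((-1)^{q-1}\alpha\big)^k\det(b)\,1_q$; hence $N_k$ qualifies iff $\big((-1)^{q-1}\alpha\big)^k\det(b)\in(\F^\times)^q$. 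Since $(-1)^{q-1}\alpha\notin(\F^\times)^q$ (recall $\alpha\notin(\F^\times)^q$ throughout, and $-\alpha\notin(\F^\times)^2$ when $q=2$, as we are outside case $(*)$), ranging over $k=0,\dots,q-1$ shows that some subgroup qualifies iff $\det(b)\in\langle(-1)^{q-1}\alpha,(\F^\times)^q\rangle=\Ddet(A_\alpha)$. Equivalently, $G(\alpha,b)$ is primitive iff $\det(b)\notin\Ddet(A_\alpha)$. The main obstacle is the monomiality equivalence — specifically, proving that a monomial realization must place the diagonal part at one of the $q+1$ distinguished subgroups, and that diagonalizing such a subgroup genuinely returns a permuted basis; the ensuing determinant bookkeeping, including the $(-1)^{q-1}$ factor and the $q=2$ commutator sign, is routine.
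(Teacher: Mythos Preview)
Your argument is correct and ultimately rests on the same idea as the paper's, but you prove the key primitivity criterion from scratch rather than citing it. The paper invokes \cite[Lemma~1]{Konyukh8}, which says that an absolutely irreducible locally nilpotent subgroup of $\GL(q,\F)$ is primitive if and only if all its elements of order $q$ are scalar, and then checks directly whether some coset $dbA_\alpha$ contains an element of order $q$ (using $(dbb_1)^q=\det(bb_1)1_q$). Your route---enumerating the $q+1$ intermediate subgroups of $\overline{G}\cong(\Z/q)^2$ and testing each for diagonalizability---is the unpacked version of that same criterion: a nonscalar element of order $q$ is precisely a generator of a diagonalizable intermediate subgroup, and your monomiality argument (transitivity of $\pi(G)$ forces $|\pi(G)|=q$, so the diagonal part is one of the $q+1$ subgroups) is essentially a proof of Konyukh's lemma in this special situation. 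The advantage of your approach is that it is self-contained and makes the geometry of the imprimitivity system explicit; the paper's version is shorter because it outsources that step. The determinant bookkeeping, including the $(-1)^{q-1}$ sign from $\det(I_\alpha)$ and the $q=2$ commutator contribution, is handled correctly in both.
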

\begin{proof}
By Lemma~\ref{Lem311}, assume that 
we are not in case $(*)$. 
According to \cite[Lemma~1]{Konyukh8},
$G(\alpha, b)$ is primitive 
if and only if its elements of order $q$ 
are all scalar.
Suppose that $\det(b) \not \in \Ddet(A_\alpha )$
 and let $h \in G(\alpha, b)$, $|h| = q$. 
If $h \not \in  A_\alpha$, i.e., 
$h = \allowbreak dbb_1$
for some $b_1 \in A_\alpha$, then 
 $h^q = \det(bb_1 )1_q$ implies that 
$\det(b) \in \Ddet(A_\alpha )$. 
Thus $h \in A_\alpha$, and $h$ is
scalar by Lemma~\ref{Lem311}. 
Conversely, if $\det(b) \in \Ddet(A_\alpha )$
 then $dbx$ for some $x\in A_\alpha$ is a
nonscalar element of $G(\alpha, b)$ of order $q$.
\end{proof}
\begin{remark}
 Except in case $(*)$, if $\F$ is finite then 
$G(\alpha, b)$ is monomial.
\end{remark}
\begin{lemma}\label{Lem314} 
For $i = 1, 2$, let $g_i = db_i$ where 
$b_i \in \Delta_\alpha$. The following  are
equivalent$:$
\begin{itemize}
\item[{\rm (i)}] 
 $g_1$ is $\GL(q,\F)$-conjugate to $g_2;$
\item[{\rm (ii)}] 
 $g_1$ is $\Delta_\alpha$-conjugate to $g_2;$
\item[{\rm (iii)}] $\det(b_1 ) = \det(b_2 )$.
\end{itemize}
\end{lemma}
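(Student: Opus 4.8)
The plan is to prove the cycle of implications (ii)$\Rightarrow$(i)$\Rightarrow$(iii)$\Rightarrow$(ii), closely mirroring the proof of Lemma~\ref{Lem31}: the role played there by $\mathrm{D}(q,\F)$ together with cyclic permutation of diagonal entries is here played by the field $\Delta_\alpha$ together with the Galois action induced by $d$. First note that, since we are discussing $\GL(q,\F)$-conjugacy, each $g_i = db_i$ lies in $\GL(q,\F)$, which forces $b_i$ to be a nonzero element of the field $\Delta_\alpha$, hence a unit; in particular $b_1^{-1}$ and $b_2 b_1^{-1}$ make sense in $\Delta_\alpha^\times$. The implication (ii)$\Rightarrow$(i) is then trivial because $\Delta_\alpha^\times \leq \GL(q,\F)$. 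For (i)$\Rightarrow$(iii), recall that $\det$ is a conjugacy invariant; since $\det(g_i) = \det(d)\det(b_i)$ and $\det(d) = \xi^{q(q-1)/2} = (-1)^{q-1}$ is a fixed nonzero scalar (using $\xi^q = 1$), the equality $\det(g_1) = \det(g_2)$ forces $\det(b_1) = \det(b_2)$.

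The substance of the lemma lies in (iii)$\Rightarrow$(ii). Let $\tau$ denote the $\F$-automorphism $x \mapsto dxd^{-1}$ of $\Delta_\alpha$. Since $\mathsf{N}_{\GL(q,\F)}(\Delta_\alpha^\times) = \langle \Delta_\alpha^\times, d\rangle$ with $\langle \Delta_\alpha^\times, d\rangle / \Delta_\alpha^\times \cong \mathrm{Gal}(\Delta_\alpha/\F)$ cyclic of order $q$, the automorphism $\tau$ generates $\mathrm{Gal}(\Delta_\alpha/\F)$. For $c \in \Delta_\alpha^\times$, commutativity of $\Delta_\alpha$ together with the relation $dc^{-1} = \tau(c)^{-1}d$ gives
\[
c g_1 c^{-1} = c(db_1)c^{-1} = (cdc^{-1})b_1 = c\,\tau(c)^{-1}\,db_1 = c\,\tau(c)^{-1}\,g_1 ,
\]
so that $c g_1 c^{-1} = g_2$ holds if and only if $\tau^{-1}(c)\,c^{-1} = b_2 b_1^{-1}$. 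Now $\Delta_\alpha = \langle I_\alpha\rangle_\F \cong \F[X]/(X^q-\alpha)$ is realized inside $\mathrm{Mat}(q,\F)$ (up to conjugacy) as the regular representation of this field extension, for which the determinant of an element coincides with its field norm $N = N_{\Delta_\alpha/\F}$. Hence hypothesis (iii) yields $N(b_2 b_1^{-1}) = \det(b_2)/\det(b_1) = 1$, and Hilbert's Theorem~90 for the cyclic extension $\Delta_\alpha/\F$ (with Galois group $\langle \tau^{-1}\rangle$) supplies $c \in \Delta_\alpha^\times$ with $\tau^{-1}(c)\,c^{-1} = b_2 b_1^{-1}$. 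This $c$ conjugates $g_1$ to $g_2$, establishing (ii) and closing the cycle. The argument is uniform: it uses only that $\Delta_\alpha/\F$ is a cyclic Galois extension of prime degree $q$ and that $d$ maps to a generator of its Galois group, so case $(*)$ needs no separate treatment.

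I do not expect a serious obstacle. The two points requiring care are the bookkeeping distinguishing $\tau$ from $\tau^{-1}$ in the displayed commutator identity, and the identification of $\det$ with $N_{\Delta_\alpha/\F}$ on the matrix realization of $\Delta_\alpha$; the latter is routine once one fixes the isomorphism $\Delta_\alpha \cong \F(\sqrt[q]{\alpha})$, legitimate because $X^q - \alpha$ is $\F$-irreducible ($q$ being prime and $\alpha \notin (\F^\times)^q$). The remaining verifications — that $\tau$ genuinely generates $\mathrm{Gal}(\Delta_\alpha/\F)$, that $\det(d)$ equals the claimed scalar — are immediate from the normalizer description recorded before the lemma.
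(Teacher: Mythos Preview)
Your argument is correct: the cycle (ii)$\Rightarrow$(i)$\Rightarrow$(iii) is immediate, and for (iii)$\Rightarrow$(ii) the identification of $\det$ with the field norm $N_{\Delta_\alpha/\F}$ together with Hilbert's Theorem~90 for the cyclic extension $\Delta_\alpha/\F$ is exactly the right tool. The paper itself gives no proof here, merely citing \cite[Lemma~23]{CEJM}; your self-contained treatment via Hilbert~90 is the natural approach and almost certainly matches what is done there.
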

\begin{proof} See \cite[Lemma~23]{CEJM}.
\end{proof}

\begin{corollary}
\label{Cor315}
Primitive groups 
$G(\alpha, b_1 )$, $G(\alpha, b_2 )$ 
not in case $(*)$ are 
conjugate if and only if 
$\Ddet(G(\alpha, b_1 )) = \Ddet(G(\alpha, b_2 ))$
 and $\det(b_1 ) = \det(b_2 c)$ for some 
$c \in A_\alpha$.
\end{corollary}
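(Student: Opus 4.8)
The plan is to derive Corollary~\ref{Cor315} as a direct consequence of Lemma~\ref{Lem314} together with the structural description of the groups $G(\alpha,b)$ established in Lemmas~\ref{Lem37}, \ref{Lem311}, and \ref{Lem312}. Throughout, we work outside case $(*)$, so $A_\alpha = \langle I_\alpha, \F^\times 1_q\rangle$ is monomial and $|G(\alpha,b)/\F^\times 1_q| = q^2$. First I would record that each $G(\alpha,b_i)$ decomposes as $A_\alpha \langle db_i\rangle$, with $A_\alpha$ the unique irreducible abelian normal subgroup guaranteed by Lemma~\ref{Lem36} — this canonical subgroup is what lets us compare the two groups coset by coset.

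For the forward direction, suppose $G(\alpha,b_1)$ and $G(\alpha,b_2)$ are $\GL(q,\F)$-conjugate, say by $x$. Then $\Ddet$ is clearly a conjugacy invariant, giving the first condition. For the second, I would argue that conjugation must carry $A_\alpha$ (being characteristic as the unique irreducible abelian normal subgroup, or at least conjugacy-canonical in both groups) to $A_\alpha$, hence $x$ normalizes $\Delta_\alpha^\times$ and lies in $\langle \Delta_\alpha^\times, d\rangle$; then $x(db_1)x^{-1}$ is an element of $G(\alpha,b_2)$ outside $A_\alpha$, so it equals $db_2 b_1'$ for some $b_1' \in A_\alpha$, and comparing with Lemma~\ref{Lem314}(iii) applied to $db_1$ and $db_2 b_1'$ yields $\det(b_1) = \det(b_2 b_1') = \det(b_2 c)$ with $c = b_1' \in A_\alpha$ (shrinking to $c \in A_\alpha \cap \Delta_\alpha$ if needed, noting $\det$ takes the same values). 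Conversely, given the two conditions, Lemma~\ref{Lem314} produces an element $g \in \GL(q,\F)$ conjugating $db_1$ to $db_2 c$; one then checks $g$ normalizes $\Delta_\alpha^\times$, hence $A_\alpha$, and therefore conjugates $G(\alpha,b_1) = \langle A_\alpha, db_1\rangle$ into $\langle A_\alpha, db_2 c\rangle = G(\alpha,b_2)$, the last equality because $db_2 c \in G(\alpha,b_2) \setminus A_\alpha$ and this coset together with $A_\alpha$ generates all of $G(\alpha,b_2)$ (as the quotient is elementary abelian of order $q^2$ generated by the image of any such element together with $A_\alpha/\F^\times 1_q$). The $\Ddet$ equality is needed to ensure the conjugated group is not merely contained in but equal to $G(\alpha,b_2)$.

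The main obstacle I anticipate is the bookkeeping around \emph{which} subgroup plays the role of the canonical abelian normal subgroup, and making precise that conjugation preserves it. In case $(*)$ the group $G(\alpha,b)$ is primitive with $A_\alpha$ itself primitive, so the coset structure is slightly different; but the corollary explicitly excludes case $(*)$ (and the preceding discussion already reduces $G(-\gamma^2,b)$ to $G(-1,b')$ via Lemma~\ref{Lem31}), so this is not an issue here. A second small point requiring care is the passage between $c \in A_\alpha$ and $c \in \Delta_\alpha^\times$: since $A_\alpha \subseteq \Delta_\alpha^\times$ and $\det(A_\alpha) \subseteq \det(\Delta_\alpha^\times)$ but we only need $\det(b_1) = \det(b_2 c)$, it suffices to work with $c \in A_\alpha$ directly, matching the statement. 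I expect the proof to be short once these identifications are in place, essentially a two-line invocation of Lemma~\ref{Lem314} plus a normalizer argument, and I would cite \cite[Lemma~24]{CEJM} or the analogous monomial-case argument if the details parallel the earlier treatment closely.
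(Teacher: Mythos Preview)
Your overall strategy coincides with the paper's: show that any conjugating element $t$ normalizes $A_\alpha$, deduce $t\in\langle d,\Delta_\alpha^\times\rangle$, check that $tdb_1t^{-1}\in db_2A_\alpha$, and read off the determinant condition via Lemma~\ref{Lem314}; the converse is immediate from Lemma~\ref{Lem314} since the conjugator lies in $\Delta_\alpha^\times$ and hence fixes $A_\alpha$.

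There is, however, a genuine gap at the step you yourself flag. Your proposed justification---that $A_\alpha$ is ``the unique irreducible abelian normal subgroup''---is false. In $G(\alpha,b_2)$ (not in case $(*)$) the quotient by $\F^\times 1_q$ is elementary abelian of order $q^2$, so there are $q+1$ subgroups of index $q$ containing the scalars; each is abelian and normal, and each is irreducible because $(db_2I_\alpha^k)^q=\det(b_2I_\alpha^k)1_q$ with $\det(b_2I_\alpha^k)\notin(\F^\times)^q$ (this uses $\det(b_2)\notin\Ddet(A_\alpha)$, i.e., primitivity). So $A_\alpha$ is not singled out by those properties. The paper's citation of Lemma~\ref{Lem312} points to the correct argument: if $tA_\alpha t^{-1}\neq A_\alpha$ then $tI_\alpha t^{-1}=(db_2)^jc$ with $j\not\equiv 0\pmod q$ and $c\in A_\alpha$; comparing determinants gives $(-1)^{q-1}\alpha=\det(d)^j\det(b_2)^j\det(c)$, whence $\det(b_2)^j\in\Ddet(A_\alpha)$ and so $\det(b_2)\in\Ddet(A_\alpha)$, contradicting primitivity by Lemma~\ref{Lem312}. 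With this determinant argument in place of the uniqueness claim, your proof goes through and matches the paper's.
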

\begin{proof}
 Suppose that $tG(\alpha, b_1 )t^{-1} = G(\alpha, b_2 )$.
 Since $t$ normalizes $A_\alpha$ (Lemma~\ref{Lem312}),
we have $t\in \langle d, \Delta_{\alpha}^\times\rangle$. 
Then it may be checked that $tdb_1 t^{-1}\in db_2A_\alpha$.
The other direction is clear by Lemma~\ref{Lem314}.
\end{proof}

Denote by $\mathcal G$ the set of all primitive 
$G(\alpha, b)$ such
that the only case $(*)$ groups in
 $\mathcal G$ are the $G(-1, b)$.

\begin{remark} $\mathcal{G}=\emptyset$ if
 $\F$ is algebraically closed, for then 
$G(\alpha, b)$ is not defined. When $\F$ 
is finite, $\mathcal G\neq \emptyset$ if and 
only if $q = 2$ and $|\F| \equiv 3 \pmod 4$.
\end{remark}

\begin{lemma}\label{Lem317}
The subset 
$\widetilde{\mathcal{G}}$ of
of $\mathcal G$ consisting of all groups not 
in case $(*)$ splits up into finitely many 
$\GL(q,\F)$-conjugacy classes if and only if 
$\F^\times /(\F^\times )^q$ is finite.
\end{lemma}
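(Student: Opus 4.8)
The plan is to convert $\GL(q,\F)$-conjugacy of the groups in $\widetilde{\mathcal G}$ into the arithmetic of $\F^\times$ modulo $q$-th powers by way of Corollary~\ref{Cor315}. The members of $\widetilde{\mathcal G}$ are the primitive groups $G(\alpha,b)$ with $\alpha\in\F^\times\setminus(\F^\times)^q$ (and $\alpha\notin-(\F^\times)^2$ when $q=2$) and, by Lemma~\ref{Lem312}, $\det(b)\notin\Ddet(A_\alpha)$, where $\Ddet(A_\alpha)=\langle(-1)^{q-1}\alpha,(\F^\times)^q\rangle\supseteq(\F^\times)^q$. Fixing $\alpha$ and using that $\det(b)=N_{\Delta_\alpha/\F}(b)$ while $(db)^q=\det(b)1_q$, one sees that $\Ddet(G(\alpha,b))=\langle\det(d)\det(b),\Ddet(A_\alpha)\rangle$ depends only on the coset $\det(b)\,\Ddet(A_\alpha)$, so the first clause of Corollary~\ref{Cor315} follows from the second; hence the $\GL(q,\F)$-conjugacy classes of the members of $\widetilde{\mathcal G}$ built on this $\alpha$ biject with the non-trivial cosets of $\Ddet(A_\alpha)$ lying in the subgroup $N_{\Delta_\alpha/\F}(\Delta_\alpha^\times)\,\Ddet(A_\alpha)/\Ddet(A_\alpha)$ of $\F^\times/\Ddet(A_\alpha)$. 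By the reasoning preceding Lemma~\ref{Lem37}, the $\GL(q,\F)$-conjugacy classes of the fields $\Delta_\alpha$ correspond to the subgroups of order $q$ of $\F^\times/(\F^\times)^q$, that of $\Delta_\alpha$ being $\langle\alpha(\F^\times)^q\rangle$. Consequently the number of $\GL(q,\F)$-conjugacy classes in $\widetilde{\mathcal G}$ is at most
\[
\sum_{[\Delta_\alpha]}\Bigl(\bigl|\,N_{\Delta_\alpha/\F}(\Delta_\alpha^\times)\,\Ddet(A_\alpha)/\Ddet(A_\alpha)\,\bigr|-1\Bigr),
\]
the sum running over the conjugacy classes of admissible $\Delta_\alpha$, and it is at least any one of these summands, since Corollary~\ref{Cor315} is an equivalence.

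For the ``if'' direction, assume $\F^\times/(\F^\times)^q$ is finite. Then it has finitely many subgroups, so only finitely many classes $[\Delta_\alpha]$ arise; and for each, the inclusions $(\F^\times)^q\le\Ddet(A_\alpha)\le\F^\times$ exhibit $\F^\times/\Ddet(A_\alpha)$ as a quotient of the finite group $\F^\times/(\F^\times)^q$, whence the corresponding summand is finite. Therefore $\widetilde{\mathcal G}$ falls into finitely many $\GL(q,\F)$-conjugacy classes.

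For the converse I would argue contrapositively. Assuming $\F^\times/(\F^\times)^q$ is infinite, it suffices — by the bound above — to exhibit a single admissible $\alpha$ for which $N_{\Delta_\alpha/\F}(\Delta_\alpha^\times)$ has infinite image in $\F^\times/\Ddet(A_\alpha)$, equivalently (as $[\Ddet(A_\alpha):(\F^\times)^q]\le q$) infinite image in $\F^\times/(\F^\times)^q$. The concrete tool is the norm identity $N_{\Delta_\alpha/\F}(\zeta-c)=(-1)^q(c^q-\alpha)$, valid for $\zeta\in\Delta_\alpha$ with $\zeta^q=\alpha$ and every $c\in\F$ (equivalently $N_{\Delta_\alpha/\F}(x+y\sqrt{\alpha})=x^2-\alpha y^2$ when $q=2$), which places the family $\{\alpha-c^q:c\in\F\}$, and hence — after dividing by $\alpha\in N_{\Delta_\alpha/\F}(\Delta_\alpha^\times)$ — the family $\{1-u:u\in\alpha^{-1}(\F^\times)^q\}$, inside the norm group. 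The step I expect to demand the most work is the existence of a \emph{good} such $\alpha$ over a completely arbitrary field: one must rule out that for every non-trivial coset $C$ of $(\F^\times)^q$ the set $\{(1-u)(\F^\times)^q:u\in C\}$ is finite while $\F^\times/(\F^\times)^q$ is infinite. I would attack this by fibre-counting on the maps $C\to\F^\times/(\F^\times)^q$, $u\mapsto(1-u)(\F^\times)^q$ — a finite image forces ``many'' $u\in C$ with $1-u\in t(\F^\times)^q$ for a single $t$ — supplemented, if needed, by the observation that the subgroups $N_{\Delta_\alpha/\F}(\Delta_\alpha^\times)(\F^\times)^q/(\F^\times)^q$ together with the lines $\langle\alpha(\F^\times)^q\rangle$ generate $\F^\times/(\F^\times)^q$, so they cannot all be finite. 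Once one good $\alpha$ is in hand, the displayed count yields infinitely many $\GL(q,\F)$-conjugacy classes in $\widetilde{\mathcal G}$, completing the contrapositive.
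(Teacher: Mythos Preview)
Your ``if'' direction is correct and is essentially the paper's argument: finiteness of $\F^\times/(\F^\times)^q$ bounds the number of candidates for $\Ddet(G(\alpha,b))$, and Corollary~\ref{Cor315} then bounds the number of conjugacy classes.

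The converse, however, is where your proposal is genuinely incomplete---and you say so yourself. You fix a single $\alpha$ and try to show that $N_{\Delta_\alpha/\F}(\Delta_\alpha^\times)$ has infinite image in $\F^\times/(\F^\times)^q$, but the crucial existence step (``a \emph{good} such $\alpha$'') is left as a plan: ``I would attack this by fibre-counting \ldots\ supplemented, if needed, by \ldots''. Nothing you have actually proved rules out that for every admissible $\alpha$ the norm group already sits inside $\Ddet(A_\alpha)$; your closing generation remark does not help either, since the lines $\langle\alpha(\F^\times)^q\rangle$ alone already span $\F^\times/(\F^\times)^q$, so their joint generation with the norm images carries no information about the latter. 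The paper, by contrast, argues this direction in a single line and in the opposite logical order: if $\widetilde{\mathcal G}$ has only finitely many conjugacy classes, then there are only finitely many values of $\Ddet(G(\alpha,b))$, each a subgroup of $\F^\times$ that is finitely generated over $(\F^\times)^q$; hence $\F^\times/(\F^\times)^q$ is finitely generated, and being of exponent $q$ it is finite. The paper never isolates a particular $\alpha$ or computes norms---your route through norm groups is considerably more elaborate than what the paper does and, as written, is not a proof but a programme.
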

\begin{proof}
If the number of conjugacy classes
in $\widetilde{\mathcal{G}}$ is finite,
then $\F^\times /(\F^\times )^q$ 
is finitely generated and hence finite. 
Conversely, if there are only finitely many 
candidates for $\Ddet(G(\alpha, b))$, 
 then the number of conjugacy classes in 
 $\widetilde{\mathcal{G}}$ 
is finite by Corollary~\ref{Cor315}.
\end{proof}

We now present the main classification.
\begin{theorem}
\label{Thm318}
 Suppose that $\F^\times$ has an element of order $q$. 
A subgroup of $\GL(q,\F)$ is an absolutely irreducible
maximal locally nilpotent subgroup of $\GL(q,\F)$ 
if and only if it is conjugate to a group
in $\mathcal{N} = \{H_1 \} \cup \mathcal{H}\cup \mathcal{G}$, 
with the following exceptions when $q = 2$ and 
$\epsilon \not \in \F \! :$
\begin{itemize}
\item[{\rm (i)}]
 $H_1$ is a proper subgroup of $G(-1, 1) \in \mathcal{G};$
\item[{\rm (ii)}] 
if $\alpha\not \in -(\F^\times )^2$ and 
either $\det(b) \in -(\F^\times )^2$ or 
$\det(b) \in \alpha(\F^\times )^2$, then $G(\alpha, b)$ 
is conjugate to a proper subgroup of $G(-1, c)$ 
where $\det(c) = \alpha$.
\end{itemize}
\end{theorem}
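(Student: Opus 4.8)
The plan is to combine the monomial reduction (Lemmas~\ref{Lem32},~\ref{Lem33}) with the primitive reduction (Lemmas~\ref{Lem37},~\ref{Lem312}), and to settle maximality through the Sylow criterion of Theorem~\ref{Thm24}~(ii). First I would record the elementary reductions. If $M$ is an absolutely irreducible maximal locally nilpotent subgroup of $\GL(q,\F)$ then $M\F^\times 1_q$ is again absolutely irreducible and locally nilpotent, so $M\supseteq\F^\times 1_q$; since $q$ is prime, $M$ is either monomial or primitive; and any locally nilpotent overgroup $M'$ of $M$ satisfies $\F[M']=\F[M]=\mathrm{Mat}(q,\F)$, hence is absolutely irreducible, so by Theorem~\ref{Thm24}~(ii) (with $k=1$) $M$ is maximal locally nilpotent if and only if $M/\F^\times 1_q$ is a Sylow $q$-subgroup of $\PGL(q,\F)$. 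Thus the theorem amounts to enumerating, up to conjugacy, the Sylow $q$-subgroups of $\PGL(q,\F)$ via their preimages in $\GL(q,\F)$.

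Second, I would show that every such $M$ is conjugate to a member of $\mathcal{N}$. If $M$ is monomial, Lemmas~\ref{Lem32} and~\ref{Lem33} place $M$, up to conjugacy, inside $H_1$ or inside some $H_\alpha\in\mathcal{H}$; these $H_\alpha$ are themselves locally nilpotent and are the maximal \emph{monomial} locally nilpotent subgroups, so maximality of $M$ forces $M$ to equal one of them unless it is properly contained in a (then necessarily primitive) locally nilpotent group. If $M$ is primitive, Lemma~\ref{Lem37} places $M$ inside some $G(\alpha,b)$, Lemma~\ref{Lem312} singles out the $G(\alpha,b)$ that are themselves primitive, and the remark after Lemma~\ref{Lem311} lets us replace a case-$(*)$ group $G(-\gamma^2,b)$ by a representative $G(-1,b')$; so $M$ lies in a primitive member of $\mathcal{G}$, and equals it unless properly contained in another member of $\mathcal{G}$. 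Either way $M$ is conjugate to a member of $\mathcal{N}=\{H_1\}\cup\mathcal{H}\cup\mathcal{G}$.

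Third comes the converse: each member of $\mathcal{N}$ has Sylow image, except for the containments in (i), (ii). One checks directly that $H_\alpha/\F^\times 1_q$ for $H_\alpha\in\{H_1\}\cup\mathcal{H}$ is a Sylow $q$-subgroup of the image in $\PGL(q,\F)$ of the group of monomial matrices in $\GL(q,\F)$, and that a primitive $G(\alpha,b)\in\mathcal{G}$ has $G(\alpha,b)/\F^\times 1_q$ of order $q^2$ off case $(*)$ and, in case $(*)$, equal to the Sylow $2$-subgroup of $\mathsf{N}_{\GL(2,\F)}(\Delta_{-1}^\times)/\F^\times 1_2$ described by Lemma~\ref{Lem38} and Corollary~\ref{Cor39}. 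Such a group fails to be a full Sylow $q$-subgroup of $\PGL(q,\F)$ only when it sits properly inside a larger — by the reductions above, primitive — locally nilpotent subgroup, and one verifies this occurs precisely when $q=2$ and $\epsilon\not\in\F$, and then only as in (i), (ii). Indeed, in that case any order-$4$ element of $\F^\times$ would be a square root of $-1$, so $\mathrm{Syl}_2(\F^\times)=\{\pm 1\}$ and $D=\langle\F^\times 1_2,d\rangle$, whence $H_1=\langle D,I\rangle$; since $I=-I_{-1}d\in\langle A_{-1},d\rangle=G(-1,1)$ and $A_{-1}$ is primitive (Lemma~\ref{Lem311}), $H_1\leq G(-1,1)$ properly, because $1_2+I_{-1}\in A_{-1}$ is a non-monomial element of projective order $4$ — this is (i). For (ii), when $\alpha\not\in-(\F^\times)^2$ and $\det(b)\in-(\F^\times)^2\cup\alpha(\F^\times)^2$, I would conjugate $G(\alpha,b)$ so that $db$ goes to $dc$ with $\det(c)=\alpha$ (legitimate, since both have characteristic polynomial $X^2-\alpha$) and $I_\alpha$ goes to a scalar multiple of an element of the strictly larger primitive group $A_{-1}$, exhibiting $G(\alpha,b)$ as a proper subgroup of $G(-1,c)$.

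The hard part will be this last step: verifying the two embeddings exactly, and — more delicate — confirming that \emph{outside} (i), (ii) every group listed in $\mathcal{N}$ really is a full Sylow $q$-subgroup of $\PGL(q,\F)$, i.e. admits no further primitive locally nilpotent overgroup. This rests on the fine comparison of $\mathrm{Syl}_2(\Delta_\alpha^\times)$ with $\mathrm{Syl}_2(\Delta_{-1}^\times)$ supplied by Lemma~\ref{Lem38} and Corollary~\ref{Cor39}, together with the determinant bookkeeping of Lemma~\ref{Lem312} and Corollary~\ref{Cor315}.
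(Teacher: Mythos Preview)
Your overall plan matches the paper's: reduce via the monomial/primitive dichotomy and Lemmas~\ref{Lem32}, \ref{Lem33}, \ref{Lem37}, \ref{Lem311}, \ref{Lem312}, then check maximality case by case. Framing maximality through Theorem~\ref{Thm24}~(ii) is a legitimate restatement of what the paper does directly. Your verification of exception~(i) is correct and essentially identical to the paper's.

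There is, however, a genuine slip in your handling of exception~(ii). You assert that $db$ has characteristic polynomial $X^2-\alpha$ and should be conjugated to $dc$ with $\det(c)=\alpha$, while $I_\alpha$ lands in $A_{-1}$. This has the roles reversed. For $b\in\Delta_\alpha^\times$ one computes $\mathrm{tr}(db)=0$ and $\det(db)=-\det(b)$, so the characteristic polynomial of $db$ is $X^2-\det(b)$, not $X^2-\alpha$. In the case $\det(b)=-\eta^2\in -(\F^\times)^2$ this polynomial is $X^2+\eta^2$, which equals $X^2-\alpha$ only if $\alpha\in -(\F^\times)^2$, contrary to hypothesis. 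The paper's argument runs the other way: since $db$ and $\eta I_{-1}$ both have trace $0$ and determinant $\eta^2$, some $s$ satisfies $s\,db\,s^{-1}=\eta I_{-1}\in A_{-1}$; then $I_\alpha db I_\alpha^{-1}=-db$ forces $sI_\alpha s^{-1}$ to normalise $\Delta_{-1}^\times$, whence $sI_\alpha s^{-1}=dc$ with $\det(c)=\alpha$. The case $\det(b)\in\alpha(\F^\times)^2$ is analogous (replace $db$ by $I_\alpha db$). So your conjugation scheme needs to be swapped.

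On the ``hard part'' you flag at the end: the paper does not use any refined Sylow comparison via Lemma~\ref{Lem38} or Corollary~\ref{Cor39} here. The key observation is a blunt order count. Outside case~$(*)$ one has $|G(\alpha,b)/\F^\times 1_q|=q^2$, whereas $|H_\alpha/\F^\times 1_q|\geq q\cdot|\mathrm{Syl}_q(\F^\times)|^{q-1}\geq q^{q}$, so no $H_\alpha$ can sit inside a non-$(*)$ primitive group; any primitive overgroup must therefore be a case-$(*)$ group $G(-1,b)$, forcing $q=2$, $\epsilon\notin\F$, and then $\alpha\in\pm(\F^\times)^2$ by comparing determinants. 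Likewise, two non-$(*)$ primitive groups have equal order modulo the centre and hence cannot be properly nested. This elementary cardinality comparison is the missing ingredient in your outline, and it is what makes the verification short rather than delicate.
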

\begin{proof}
We observed previously that all groups 
in $\mathcal N$ are absolutely irreducible
locally nilpotent. By Lemmas~\ref{Lem32}, \ref{Lem33}, 
\ref{Lem37}, and remarks after Lemma~\ref{Lem311}, an
absolutely irreducible locally nilpotent subgroup 
of $\GL(q,\F)$ is conjugate to a subgroup of a group in 
$\mathcal N$. 
It remains to show that the 
$H_\alpha$ and $G(\alpha, b) \in\mathcal{G}$ 
are  maximal locally nilpotent, 
with the stated exceptions.

Let $G$ be a maximal locally nilpotent subgroup 
of $\GL(q,\F)$ containing $H_\alpha$. If $G$
is monomial then $tGt^{-1} = H_\beta$  for some
$t$, $H_\beta$.
If $tDt^{-1} \neq D$ then
$tDt^{-1}\cap D$ is scalar of index $q$ in $D$, 
so  $|H_\beta /\F^\times 1_q | = q^2$ ; but 
$H_\beta /\F^\times 1_q$ has cardinality
at least $q^{q+1}$. 
Thus $tDt^{-1} = D$, and then $q = |H_\beta : D| \geq
 |tH_\alpha t^{-1} : D| = |H_\alpha : D| = q$.
Therefore $tH_\alpha t^{-1} = H_\beta$, i.e., 
$H_\alpha = G$. 
Next suppose that $G$ is primitive, hence conjugate
to some $G(\alpha_1 , b)$. In every case other than 
$(*)$, $|G/\F^\times 1_q | = q^2$ 
is less than the cardinality of $H_\alpha /\F^\times 1_q$.
 Hence $q = 2$, $\epsilon \not \in \F^\times$, 
$G$ is conjugate to $G(-1, b)$, and
$H_\alpha = \langle d, I_\alpha , 
\F^\times 1_2 \rangle$. Either $I_\alpha$
 or $I_{-\alpha} = dI_\alpha$ is 
conjugate to $h \in A_{-1}$ such
that $h^2 \in \F^\times 1_2$. Now $h$ has the form 
$\eta I_{-1}$, $\eta \in \F^\times$, and by 
comparing determinants we get $\alpha = \pm \eta^2$.
Thus if $H_\alpha \in \mathcal{H}$ then $H_\alpha$ 
is maximal. However, 
$H_1 = \langle d, I_{-1} , \F^\times 1_2\rangle$
is a proper subgroup of $G(-1, 1)$.

Let $G$ be a maximal locally nilpotent subgroup 
of $\GL(q,\F)$ containing $G(\alpha, b) 
\in \allowbreak \mathcal{G}$.
Then $tGt^{-1} = G(\alpha_1 , b_1 ) \in\mathcal{G}$
for some $t$.
 Apart from when $q = 2$, $\epsilon\not \in \F^\times$,
and $\alpha_1 = \allowbreak -1$, we have 
$|G(\alpha, b)/\F^\times 1_q | = 
|G(\alpha_1, b_1)/\F^\times 1_q | = 
|G/\F^\times 1_q | = q^2$, 
and thus $G(\alpha, b) = \allowbreak G$. 
Suppose that $q = 2$, $\epsilon \not \in \F$, and 
$\alpha_1 = \allowbreak -1$. 
If $\alpha \in -(\F^\times )^2$ then
$G(\alpha, b)$ is conjugate to some $G(-1, b_2 )$, 
so that  $G(\alpha, b) =
\allowbreak  G$. Therefore, if $G(\alpha, b)$ is
not maximal then $\alpha \not \in -(\F^\times )^2$. 
For the rest of the proof, 
$\alpha \not \in -(\F^\times )^2$, which means that 
$G(\alpha, b) = \langle I_\alpha , db, 
\F^\times 1_2\rangle$. One of the 
following occurs: $tI_\alpha t^{-1} \not \in A_{-1}$
 and $tdbt^{-1}\in  A_{-1}$, or
 $tI_\alpha t^{-1} \not \in A_{-1}$ and 
$tdbt^{-1}\not \in  A_{-1}$.
In the first situation, $\det(b) \in -(\F^\times )^2$;
in the second, $[I_\alpha , db]$ scalar forces 
$tdbt^{-1} \in  I_{-1} tI_\alpha t^{-1} \F^\times$,
so $\det(b) \in \alpha (\F^\times )^2$. 
Suppose that $\det(b) \in \allowbreak -(\F^\times )^2$. 
Then $\det(db) = \allowbreak \det(\eta I_{-1} )$ 
for some $\eta \in \F^\times$. Since $db$ and 
$\eta I_{-1}$ have zero trace,
these elements are conjugate, say 
$sdbs^{-1} = \eta I_{-1}$. Also, 
$sI_\alpha s^{-1}$ normalizes $\Delta_{-1}^\times$, 
because $I_\alpha db I_{\alpha}^{-1} = -db$.
Thus $sI_{\alpha} s^{-1} = dc$,
where $c \in \Delta_{-1}^\times$ 
and $\det(c) = \alpha$. It follows that 
$G(\alpha, b)$ is conjugate to a proper subgroup
of $G(-1, c)$. Entirely similar reasoning leads to
this same conclusion when 
$\det(b) \in \alpha(\F^\times )^2$.
\end{proof}

The next lemma completes our classification.
\begin{lemma}
\label{Lem319}
Suppose that $\GL(q,\F)$ has irreducible 
abelian subgroups, and let $H$ be an
irreducible maximal abelian subgroup of $\GL(q,\F)$.
\begin{itemize}
\item[{\rm (i)}] If $\hspace{.4pt} \F^\times$ has no 
element of order $q$, then $H$ is a maximal locally 
nilpotent subgroup of $\GL(q,\F)$, and
any maximal locally nilpotent subgroup 
of $\GL(q,\F)$ is abelian.
\item[{\rm (ii)}] 
Suppose that $\F^\times$ has an element of 
order $q$. Then $H$ is a maximal 
locally nilpotent subgroup of $\GL(q,\F)$ unless
$q = 2$ and $\epsilon \not \in \F;$ in that case,
$H$  is a maximal locally nilpotent
subgroup if and only if 
$H/\F^\times 1_2$  is not a $2$-group.
\end{itemize}
\end{lemma}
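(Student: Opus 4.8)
The plan is to route the whole lemma through a single question: is $H/\F^\times 1_q$ a $q$-group? First I would fix the shape of $H$: an irreducible abelian subgroup of $\GL(q,\F)$ generates, by Schur's lemma, a field $\Delta=\langle H\rangle_\F$, and irreducibility makes the natural module one-dimensional over $\Delta$, so $|\Delta:\F|=q$; hence $\mathsf{C}_{\GL(q,\F)}(H)=\Delta^\times$, and maximality of $H$ among abelian subgroups forces $H=\Delta^\times$ for a degree-$q$ field extension $\Delta$ of $\F 1_q$, with $\F^\times 1_q\leq H$. The key reduction is then: if $H$ is not maximal locally nilpotent, choose a locally nilpotent $G$ with $H\lneq G$; then $G$ is irreducible (it contains $H$) and nonabelian (it properly contains the maximal abelian $H$), hence absolutely irreducible since $q$ is prime, so $G/\F^\times 1_q=G\F^\times 1_q/\F^\times 1_q$ lies in a Sylow $q$-subgroup of $\PGL(q,\F)$ and consequently $H/\F^\times 1_q$ is a $q$-group. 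Contrapositively, \emph{if $H/\F^\times 1_q$ is not a $q$-group then $H$ is maximal locally nilpotent}. This already disposes of (i): with no element of order $q$ in $\F^\times$, $\GL(q,\F)$ has no absolutely irreducible locally nilpotent subgroup, so $G$ cannot exist and $H$ is maximal locally nilpotent, while for the same reason any irreducible locally nilpotent subgroup is abelian (else it would be absolutely irreducible nonabelian), giving the remaining assertion.

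For (ii) the aim is to show $H/\F^\times 1_q$ is a $q$-group precisely when $q=2$, $\epsilon\notin\F$ and $\Delta\cong\F(\epsilon)$. Suppose first $\Delta/\F$ is not Galois --- which for $q=2$ is impossible, since in (ii) $\mathrm{char}\,\F\neq q$ and a quadratic extension is then separable, hence Galois. If $\Delta^\times/\F^\times 1_q$ were a $q$-group, picking $x\in\Delta\setminus\F$ and $m\geq 1$ minimal with $x^{q^m}\in\F^\times$ would give $y=x^{q^{m-1}}\notin\F$ with $y^q\in\F$; as $q$ is prime and $\F^\times$ contains an element of order $q$, $X^q-y^q$ is $\F$-irreducible and $\Delta=\F(y)$ is a cyclic Kummer extension --- a contradiction --- so the reduction applies and $H$ is maximal locally nilpotent. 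Suppose instead $\Delta/\F$ is Galois, hence cyclic; by the analysis preceding Lemma~\ref{Lem37}, $\Delta$ is conjugate to some $\Delta_\alpha$ with $\alpha\notin(\F^\times)^q$. Unless $q=2$, $\epsilon\notin\F$ and $\alpha\in-(\F^\times)^2$ (that is, $\Delta_\alpha\cong\F(\epsilon)$), Lemma~\ref{Lem310} applied with $\E=\Delta_\alpha$ and $a=I_\alpha$ yields $\mathrm{Syl}_q(\Delta_\alpha^\times/\F^\times 1_q)=\langle I_\alpha\F^\times 1_q\rangle$, a group of order $q$; but $\Delta_\alpha^\times/\F^\times 1_q$ has order larger than $q$ (for instance the class of $1_q+I_\alpha$, a nonzero hence invertible element of the field $\Delta_\alpha$, is no power of the class of $I_\alpha$), so this maximal $q$-subgroup is proper, $\Delta_\alpha^\times/\F^\times 1_q$ is not a $q$-group, and $H$ is again maximal locally nilpotent. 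Together these settle (ii) off the exceptional triple and, in the triple, prove the direction ``$H/\F^\times 1_2$ not a $2$-group $\Rightarrow$ $H$ maximal locally nilpotent''.

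For the remaining direction, let $q=2$, $\epsilon\notin\F$ and suppose $H/\F^\times 1_2$ is a $2$-group; the work above forces $\Delta\cong\F(\epsilon)$, so after conjugation $H=\Delta_{-1}^\times$. Then $A_{-1}/\F^\times 1_2=\mathrm{Syl}_2(\Delta_{-1}^\times/\F^\times 1_2)=\Delta_{-1}^\times/\F^\times 1_2$, so $H=A_{-1}$ is a proper subgroup of $G(-1,1)\in\mathcal G$, which is absolutely irreducible locally nilpotent; hence $H$ is not maximal locally nilpotent, and (ii) is complete. I expect the main obstacle to be the dichotomy in (ii) --- pinning down that $\Delta^\times/\F^\times 1_q$ is a $q$-group only for that one triple --- which rests on marrying the Kummer argument (to exclude non-Galois $\Delta$) with the exact description of the maximal $q$-subgroup of $\Delta_\alpha^\times/\F^\times 1_q$ from Lemma~\ref{Lem310}; the element $1_q+I_\alpha$ is precisely what certifies that $\langle I_\alpha\F^\times 1_q\rangle$ is proper.
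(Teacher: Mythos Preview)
Your argument is correct. The overall reduction---``$H$ not maximal locally nilpotent forces $H/\F^\times 1_q$ to be a $q$-group''---is sound (it uses that a nonabelian irreducible group in prime degree is absolutely irreducible, together with Theorem~\ref{Thm24}), and your case analysis for~(ii) is accurate: the Kummer argument rules out non-Galois $\Delta$, Lemma~\ref{Lem310} pins down $\mathrm{Syl}_q(\Delta_\alpha^\times/\F^\times 1_q)=\langle I_\alpha\F^\times 1_q\rangle$ outside case~$(*)$, and the element $1_q+I_\alpha$ (nonzero in the field $\Delta_\alpha$, and not an $\F^\times$-multiple of any $I_\alpha^j$ by linear independence of $1,I_\alpha,\ldots,I_\alpha^{q-1}$) certifies that this Sylow subgroup is proper. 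The converse in the exceptional triple is handled by embedding $H=A_{-1}$ into $G(-1,1)$.

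The paper's proof takes a shorter, more geometric route: it observes that $H=\Delta^\times$ is \emph{not monomial}, so any strictly larger locally nilpotent group must be primitive and hence (up to conjugacy) lie in some $G(\alpha,b)$; this forces $H=A_\alpha$, and Lemma~\ref{Lem311} says $A_\alpha$ is monomial unless we are in case~$(*)$---a contradiction. Thus the paper trades your explicit computation of $\mathrm{Syl}_q(\Delta^\times/\F^\times)$ for the monomial/primitive dichotomy already packaged in Lemma~\ref{Lem311}. Your approach has the advantage of treating~(i) and the non-Galois case uniformly and transparently (the paper proves only~(ii) and leaves~(i) implicit), while the paper's approach is terser once Lemma~\ref{Lem311} is in hand. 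Both ultimately rest on Lemma~\ref{Lem310}.
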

\begin{proof}
We prove (ii). Clearly $H$ is not monomial. 
By Lemma~\ref{Lem311}, 
if $H$ is not maximal locally nilpotent 
then $q = 2$, $\epsilon\not \in \F$, and 
$H = A_\alpha\leq G(\alpha, \beta) \in \mathcal{G}$. 
\end{proof}

\bigskip

\bibliographystyle{amsplain}

\end{document}